\numberwithin{equation}{section}
\newcommand{\norm}[1]{\left\Vert#1\right\Vert}
\newcommand{\abs}[1]{\left\vert#1\right\vert}
\newtheorem{Theorem}{Theorem}[section]
\newtheorem{Proposition}[Theorem]{Proposition}
\newtheorem{cor}[Theorem]{Corollary}
\newtheorem{lemma}[Theorem]{Lemma}
\theoremstyle{remark}
\newtheorem{Example}[Theorem]{Example}
\newtheorem{Remark}[Theorem]{Remark}
\begin{document}

\title[Johnson pseudo-contractibility of certain Banach algebras]{Johnson pseudo-contractibility of certain Banach algebras and their nilpotent ideals}

\author[M. Askari-Sayah]{M. Askari-Sayah}

\author[A. Pourabbas]{A. Pourabbas}

\address{Faculty of Mathematics and Computer Science,
Amirkabir University of Technology, 424 Hafez Avenue, 15914 Tehran,
 Iran.}
\email{mehdi17@aut.ac.ir}

\email{arpabbas@aut.ac.ir}

\author[A. Sahami]{A. Sahami}

\address{Department of Mathematics,
Faculty of Basic Sciences, Ilam University, P.O. Box 69315-516, Ilam,
Iran.}

\email{a.sahami@ilam.ac.ir}

\keywords{Johnson pseudo-contractibility, Semigroup algebras, Approximately complemented ideals.}

\subjclass[2010]{Primary 43A20, Secondary 20M18, 43A20.}

\maketitle

\begin{abstract}
In this paper, we study the notion of Johnson pseudo-contractibility
for certain Banach algebras. For a  bicyclic semigroup $S$, we show that $\ell^{1}(S)$ is not Johnson pseudo-contractible.
Also for a Johnson pseudo-contractible Banach
algebra $A$, we show that $A$ has  no non-zero   complemented  closed nilpotent
ideal.\end{abstract}
\section{Introduction}
The theory of amenability for Banach algebras was  developed by  Johnson \cite{john} in 1972. This theory has been occupying an important place in research in modern analysis. But amenability for Banach algebras remains a too strong concept. This fact leads some theorists to introduce other concepts by relaxing some different conditions in the definition of amenable Banach algebras. F. Ghahramani and R. Loy \cite{generalize1} defined the concept of approximate amenability for Banach algebras. Pseudo-amenability and pseudo-contractibility, two important generalizations of amenability for Banach algebras, were introduced by F. Ghahramani and Y. Zhang in \cite{ghah-pse}. These notions have been studied for various classes of Banach algebras we may mention, for example, semigroup algebras, Segal algebras and Fourier algebras.

 Recently,  the second and third named authors defined
a new concept of amenability called Johnson pseudo-contractibility \cite{sah1}.
Johnson pseudo-contractibility of some certain Banach algebras like
group algebras, measure algebras and Lipschitz algebras were studied
in \cite{sah1}. Furthermore, the second and third authors in \cite[Example
4.1-(iii)]{sah1} showed that $M_{\mathbb{N}}(\mathbb{C})$ (The Banach algebra of all $ \mathbb{N}\times\mathbb{N} $-matrices $ (z_{ij}) $  with entries in $ \mathbb{C} $ such that $ \norm{(z_{ij})}=\sum\limits_{i,j}\abs{z_{ij}}<\infty $, $ \ell^{1} $-norm and matrix multiplication) is not Johnson
pseudo-contractible but $M_{\mathbb{N}}(\mathbb{C})$ is
pseudo-amenable.

The existence of  non-zero nilpotent ideals in amenable Banach algebras have been investigated by many authors. Firstly, In 1989, P. Curtis and R. Loy \cite{curt} showed that non-zero nilpotent ideals in amenable Banach algebras must be infinite dimensional. Also, nilpotent ideals  which have approximation property
 in amenable Banach algebras was studied by R. Loy and G. Willis \cite{loy} in 1994. Inspired by these works, Y. Zhang defined the concept of approximately complemented subspaces of Banach spaces. Moreover, he showed that a pseudo-contractible Banach algebra does not have a non-trivial closed nilpotent ideal which is approximately complemented. For further details see \cite{zhang}.

 The first purpose of this paper is to study Johnson pseudo-contractibility of certain Banach algebras including semigroup algebras and Segal algebras. The second purpose is to study nilpotent ideals in Johnson pseudo-contractible Banach algebras.

 In section \ref{sec1} we show
that for a unital Banach algebra $A$, Johnson pseudo-contractibility
is equivalent with amenability. This allows us to characterize the Johnson pseudo-contractibility of a semigroup algebra provided that the semigroup has an identity. As an application we show that for $\mathbb{N}$ with maximum as its
product, the semigroup algebra is never Johnson
pseudo-contractible (but it is pseudo-amenable). Again using this
criteria we can see that $\ell^{1}(S)$ fails to be Johnson
pseudo-contractible, whenever $S$ is a bicyclic semigroup.

In section
\ref{sec2} we define a new homological notion for Banach algebras called property $ \digamma $. This property has an important role in section \ref{sec3}. Some details of property $ \digamma $ will be given and some relation between property $ \digamma $ and Johnson pseudo-contractibility will be studied. Also, we show that in particular cases Johnson pseudo-contractibility of $ S^{1}(G) $ is equivalent with amenability of $ G $.

 Finally, in section \ref{sec3}, we turn our attention to study the structure of Johnson pseudo-contractible Banach algebras. In fact, we show
that a Johnson pseudo-contractible Banach algebra $A$ has no
non-zero closed nilpotent ideal which is  complemented
in $A.$

We recall some standard notation and definitions.
Suppose that $ X $ and $ Y $ are normed spaces. The projective tensor
product of $ X $ and $ Y $ is denoted by $ X\otimes_{p} Y$, and the space of all bounded linear maps from $ X $ into $ Y $ is denoted by $ B(X, Y) $.
Suppose that $ X_{1} $, $ X_{2} $, $ Y_{1} $ and $ Y_{2} $ are normed spaces and $ T_{i}\in B(X_{i}, Y_{i}) $, $ i = 1, 2 $. The
tensor product of $ T_{1} $ and $ T_{2} $ is the bounded linear map $ T_{1}\otimes T_{2} $, determined by
\begin{equation*}
(T_{1}\otimes T_{2})(x_{1}\otimes x_{2} ) = T_{1}(x_{1})\otimes T_{2}(x_{2}) \qquad (x_{1}\in X_{1},  x_{2}\in X_{2} ).
\end{equation*}

For a Banach algebra $ A $ the product map on $ A $ determines  a map $ \pi_{A}:A\otimes_{p}A\rightarrow A $, specified by $ \pi_{A}(a\otimes b)=ab $ for all $ a, b\in A $. The projective tensor product $ A\otimes_{p}A $
becomes a Banach $ A$-bimodule with the following module actions:
\begin{equation*}
a\cdot (b \otimes c) = ab \otimes c,\quad (b \otimes c)\cdot a = b \otimes ca \qquad(a, b, c \in A),
\end{equation*}
 and with these actions $ \pi_{A} $ becomes an $ A $-bimodule morphism.
  We regard the dual space  $ A^*$ as a Banach $A$-module with the operations defined by $(af)(b)=f(ba),\quad (fa)(b)=f(ab)$ for all $a,b\in A$ and $f\in A^*$. The first Arens product on the the second dual $  A^{\ast\ast} $ of $A$ is defined by
 $(F\square H)(f)=F(Hf),$  where $(Hf)(a)=H(fa)$ for all $F,H\in A^{**},\, f\in A^*$ and $a\in A$.

Suppose that $A$ is a Banach algebra. $A$ is said to be amenable if it has a virtual diagonal, that is, there
exists an element $M\in (A\otimes_{p}A)^{**}$ such that $a\cdot M=M\cdot a$ and
$\pi^{**}_{A}(M)a=a$ for every $a\in A$.  $A$ is pseudo-amenable (pseudo-contractible) if there
exists a net $(m_{\alpha})$ in
$A\otimes_{p}A$ such that $a\cdot m_{\alpha}-m_{\alpha}\cdot
a\rightarrow 0\,$ $(a\cdot m_{\alpha}=m_{\alpha}\cdot a)$ and
$\pi_{A}(m_{\alpha})a-a\rightarrow 0,$ for every $a\in A,$
respectively.
Moreover, $ A $ is called approximately amenable if and only if $ A^{\sharp} $, the unitization of $ A $, is pseudo-amenable, and
is called Johnson pseudo-contractible if there exists a net
$(m_{\alpha})$ in $(A\otimes_{p}A)^{**}$ such that $a\cdot
m_{\alpha}=m_{\alpha}\cdot a$ and
$\pi^{**}_{A}(m_{\alpha})a-a\rightarrow 0,$ for every $a\in A.$ Note that
 Johnson pseudo-contractibility is strictly weaker than pseudo-contractibility, see \cite[Example 4.1-(ii)]{sah1}.

There are some equivalent definitions of amenable and approximately amenable Banach algebras, see \cite{generalize1, generalize2, john} and \cite{run}.
\section{Johnson pseudo-contractibility of certain semigroup algebras}\label{sec1}

\begin{Theorem}\label{jons imply amen}
Let $ A $ be a Johnson pseudo-contractible Banach algebra with an
identity. Then $ A $ is amenable.
\end{Theorem}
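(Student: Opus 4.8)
The strategy is to use unitality to convert the net provided by Johnson pseudo-contractibility into a single \emph{exact} virtual diagonal, so that $A$ is amenable by definition. The point is that the only gap between "virtual diagonal'' and a member $m_{\alpha}$ of a Johnson-pseudo-contractibility net is that $\pi^{**}_{A}(M)a=a$ is demanded exactly for a virtual diagonal but only in the limit $\pi^{**}_{A}(m_{\alpha})a-a\to 0$ for the net, while the centrality $a\cdot m_{\alpha}=m_{\alpha}\cdot a$ already holds \emph{exactly}. So the whole task is to repair the $\pi$-relation in one well-chosen member without destroying centrality.

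First I would record two standard facts about Arens products that the argument rests on. (i) Since $A$ has an identity $e$, $e$ is the identity of the Banach algebra $(A^{**},\square)$, and the (first-Arens) left and right $A^{**}$-module actions on $A^{**}$ and on $(A\otimes_{p}A)^{**}$ restrict on $A$ to the given $A$-actions. (ii) Since $\pi_{A}\colon A\otimes_{p}A\to A$ is an $A$-bimodule morphism, its second transpose $\pi^{**}_{A}\colon (A\otimes_{p}A)^{**}\to A^{**}$ is a morphism of $A^{**}$-bimodules for the first-Arens structures, that is, $\pi^{**}_{A}(F\cdot m)=F\square\pi^{**}_{A}(m)$ and $\pi^{**}_{A}(m\cdot F)=\pi^{**}_{A}(m)\square F$ for all $F\in A^{**}$, $m\in (A\otimes_{p}A)^{**}$. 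Now, putting $a=e$ in the relation $\pi^{**}_{A}(m_{\alpha})a-a\to 0$ gives $\pi^{**}_{A}(m_{\alpha})\to e$ in norm; hence I may fix a single index $\alpha_{0}$ with $\|\pi^{**}_{A}(m_{\alpha_{0}})-e\|<1$, so that $v:=\pi^{**}_{A}(m_{\alpha_{0}})$ is invertible in $(A^{**},\square)$.

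Set $M:=v^{-1}\cdot m_{\alpha_{0}}\in (A\otimes_{p}A)^{**}$, using the first-Arens left $A^{**}$-action. By (ii), $\pi^{**}_{A}(M)=v^{-1}\square\pi^{**}_{A}(m_{\alpha_{0}})=v^{-1}\square v=e$, so $\pi^{**}_{A}(M)a=a$ for every $a\in A$. For centrality I would first check that $v$ commutes with $A$ inside $(A^{**},\square)$: for $a\in A$, exact centrality of $m_{\alpha_{0}}$ together with (ii) gives $a\square v=\pi^{**}_{A}(a\cdot m_{\alpha_{0}})=\pi^{**}_{A}(m_{\alpha_{0}}\cdot a)=v\square a$, and since $v$ is invertible, $v^{-1}$ commutes with $A$ as well. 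Finally, by associativity of the $A^{**}$-module action and once more the centrality of $m_{\alpha_{0}}$,
\[
a\cdot M=(a\square v^{-1})\cdot m_{\alpha_{0}}=(v^{-1}\square a)\cdot m_{\alpha_{0}}=v^{-1}\cdot(a\cdot m_{\alpha_{0}})=v^{-1}\cdot(m_{\alpha_{0}}\cdot a)=M\cdot a\qquad (a\in A).
\]
Thus $M$ is a virtual diagonal for $A$, and $A$ is amenable.

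The routine-but-error-prone part is fact (ii) and the manipulations around it: one must use the \emph{first} Arens product consistently, identify $a\square F$ and $F\square a$ (for $a\in A$, $F\in A^{**}$) with the appropriate one-sided module actions, and verify that each shift of parentheses in the displayed chain is a genuine instance of bimodule associativity. Everything else is immediate. (One could instead first deduce pseudo-amenability, via Goldstine's theorem and Mazur's lemma, and then invoke that a unital pseudo-amenable Banach algebra is amenable; but extracting a \emph{bounded} approximate diagonal along that route is itself not free, so the one-step correction above seems the cleaner path.)
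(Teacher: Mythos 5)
Your proof is correct, and it takes a genuinely different (and more economical) route than the paper's. Both arguments exploit the same germ of an idea: setting $a=e$ shows $\pi^{**}_A(m_\alpha)\to e$ in norm, and something "close to the identity" is therefore invertible by a Neumann series. The paper applies this to the right-multiplication operator $R_{\omega_\alpha}$ on $\ker\pi^{**}_A$, where $\omega_\alpha=\psi(\pi^{**}_A(m_\alpha)\otimes e)-m_\alpha$ is built with the Ghahramani--Loy--Willis map $\psi\colon A^{**}\otimes_p A^{**}\to(A\otimes_p A)^{**}$; it then extracts a right identity $\Upsilon$ for $\ker\pi^{**}_A$ and takes $\Phi=e\otimes e-\Upsilon$ as the virtual diagonal (essentially reproving the classical criterion that a unital algebra is amenable iff $\ker\pi^{**}_A$ has a right identity). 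You instead apply the Neumann series directly in $(A^{**},\square)$ to invert $v=\pi^{**}_A(m_{\alpha_0})$ and correct a single $m_{\alpha_0}$ to $M=v^{-1}\cdot m_{\alpha_0}$. This avoids $\psi$, the weak$^*$-density identification $\ker\pi_A^{**}=\overline{\ker\pi_A}^{w^*}$, and the operator-norm estimate on $\mathrm{ball}(\ker\pi_A^{**})$, at the price of leaning on the Arens-module bookkeeping you flag yourself: that the first-Arens left action makes $(A\otimes_p A)^{**}$ a left $(A^{**},\square)$-module (so $(F\square G)\cdot m=F\cdot(G\cdot m)$), that $\pi^{**}_A$ intertwines this action with $\square$, and that the mixed associativity $(F\cdot m)\cdot a=F\cdot(m\cdot a)$ holds when $a\in A$ (this last point is where general $G\in A^{**}$ could fail, but weak$^*$-continuity of the canonical $A$-actions saves it for $a\in A$). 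All of these are standard and verifiable by iterated weak$^*$ limits, so the argument is complete; it is arguably the cleaner of the two.
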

\begin{proof}
By hypothesis, let $ (m_{\alpha}) $ be a net in
$(A\otimes_{p}A)^{**}$ such that $a\cdot m_{\alpha}=m_{\alpha}\cdot
a$ and $\pi^{**}_{A}(m_{\alpha})a-a\rightarrow 0$ for every $a\in
A.$
 By \cite[Lemma 1.7]{ghah}, there exists a bounded linear map $
\psi:A^{\ast\ast}\otimes_{p}{A}^{\ast\ast}\longrightarrow
(A\otimes_{p} A)^{\ast\ast} $
that satisfies
\begin{enumerate}
\item[(i)]$ \psi(a\otimes b)=a\otimes b, $
\item[(ii)]$ a\cdot\psi(m)=\psi(a\cdot m),\hspace*{1cm} \psi(m)\cdot a=\psi(m\cdot a), $
\item[(iii)]$ \pi_{A}^{\ast\ast}(\psi(m))=\pi_{A^{\ast\ast}}(m), $
\end{enumerate}
for every $ a,b\in A $ and $ m\in
A^{\ast\ast}\otimes_{p}{A}^{\ast\ast}$.
 Set $
\omega_{\alpha}=\psi(\pi_{A}^{\ast\ast}(m_{\alpha})\otimes
e)-m_{\alpha} $, where $e\in A$ is the identity element. We have
\begin{equation*}
\pi_{A}^{\ast\ast}(\omega_{\alpha})=\pi_{A}^{\ast\ast}(\psi(\pi_{A}^{\ast\ast}(m_{\alpha})\otimes
e)-m_{\alpha})=\pi_{A^{\ast\ast}}(\pi_{A}^{\ast\ast}(m_{\alpha})\otimes
e)-\pi_{A}^{\ast\ast}(m_{\alpha})=\pi_{A}^{\ast\ast}(m_{\alpha})-\pi_{A}^{\ast\ast}(m_{\alpha})=0.
\end{equation*}
So $ \omega_{\alpha}\in\ker\pi_{A}^{\ast\ast} $. Since $ A $ is
unital, $ \pi_{A} $ is surjective. Thus \cite[A.3.48]{auto-dale}
implies that
\begin{equation}\label{nvhgbh}
\overline{\ker\pi_{A}}^{wk^{\ast}}=(\ker\pi_{A})^{\ast\ast}=\ker\pi_{A}^{\ast\ast}.
\end{equation}
Consider  $ \ker\pi_{A} $ as a  closed left ideal of $
A\otimes_{p}A^{op} $, where $ A^{op}  $ is reversed algebra of $ A $, and let
 $ v=\sum\limits_{i}x_{i}\otimes y_{i}\in \ker\pi_{A} $. We have
\begin{equation*}\begin{split}
v\cdot\omega_{\alpha} =& \sum\limits_{i}x_{i}\otimes
y_{i}\cdot(\psi(\pi_{A}^{\ast\ast}(m_{\alpha})\otimes e)-m_{\alpha})
\\ =& \sum\limits_{i}x_{i}\otimes
y_{i}\cdot\psi(\pi_{A}^{\ast\ast}(m_{\alpha})\otimes
e)-\sum\limits_{i}x_{i}\cdot m_{\alpha}\cdot y_{i}\\=&
\sum\limits_{i}x_{i}\otimes
y_{i}\cdot\psi(\pi_{A}^{\ast\ast}(m_{\alpha})\otimes
e)-m_{\alpha}\cdot\sum\limits_{i}x_{i} y_{i}\\=& v\cdot
\psi(\pi_{A}^{\ast\ast}(m_{\alpha})\otimes e).
\end{split}\end{equation*}
Given $ V\in \ker\pi_{A}^{\ast\ast}$, by \eqref{nvhgbh} we can
choose $ (v_{\beta})_{\beta}\subseteq\ker\pi_{A} $ such that $
w^{\ast}-\lim\limits_{\beta}v_{\beta}=V $, and by the above calculations, we
have
\begin{equation*}
\begin{split}
 V \square\omega_{\alpha}=&(w^{\ast}-\lim\limits_{\beta}v_{\beta})\square \omega_{\alpha}\\=&
 w^{\ast}-\lim\limits_{\beta}(v_{\beta}\cdot \omega_{\alpha})\\=&
 w^{\ast}-\lim\limits_{\beta}(v_{\beta}\cdot \psi(\pi_{A}^{\ast\ast}(m_{\alpha})\otimes e))\\=&
 (w^{\ast}-\lim\limits_{\beta}v_{\beta})\square \psi(\pi_{A}^{\ast\ast}(m_{\alpha})\otimes e)\\=&
 V\square\psi(\pi_{A}^{\ast\ast}(m_{\alpha})\otimes e).
\end{split}
\end{equation*}
Moreover,  $ \pi_{A}^{\ast\ast}(m_{\alpha})\longrightarrow e $
implies that
\begin{equation*}
\lim\limits_{\alpha}\psi(\pi_{A}^{\ast\ast}(m_{\alpha})\otimes
e)=\psi(\lim\limits_{\alpha}\pi_{A}^{\ast\ast}(m_{\alpha})\otimes
e)=\psi(e\otimes e)=e\otimes e,
\end{equation*}
hence
\begin{equation*}
\begin{split}
\lim\limits_{\alpha}\sup\limits_{V}\norm{V
\square\omega_{\alpha}-V}=&\lim\limits_{\alpha}\sup\limits_{V}
\norm{V \square \psi(\pi_{A}^{\ast\ast}(m_{\alpha})\otimes
e)-V}\\\leq&\lim\limits_{\alpha}\sup\limits_{V}\norm{V}\norm{\psi(\pi_{A}^{\ast\ast}(m_{\alpha})\otimes
e)-e\otimes e}=0,
\end{split}
\end{equation*}
where supremum is taken on all $ V\in{\hbox{ball}} (\ker\pi_{A}^{\ast\ast})= \{x\in
\ker\pi^{**}_{A}:||x||\leq 1\}$.
 Therefore $ R_{\omega_{\alpha}} $,
the right multiplication operator, converges to $
\mathrm{id}_{\ker\pi_{A}^{\ast\ast}} $ in the norm topology of $
{\hbox{ball}}(\ker\pi_{A}^{\ast\ast}) $. Thus, there is $ \alpha_{0}
$ such that for every $ \alpha\geq\alpha_{0} $, $R_{\omega_{\alpha}}
$ is invertible.  Since $ R_{\omega_{\alpha}} $ is surjective, for
some $ \Upsilon\in\ker\pi_{A}^{\ast\ast} $ we have  $
\Upsilon\square\omega_{\alpha}=\omega_{\alpha}.$  Given $
\Lambda\in\ker\pi_{A}^{\ast\ast} $, we have $
(\Lambda\square\Upsilon-\Lambda)\square\omega_{\alpha}=\Lambda\square\Upsilon\square\omega_{\alpha}-\Lambda\square\omega_{\alpha}=0
$. By injectivity of $ R_{\omega_{\alpha}} $ it follows that $
\Lambda\square\Upsilon=\Lambda $. So that $ \Upsilon $ is a right
identity for $ \ker\pi_{A}^{\ast\ast} $. Let $ \Phi=e\otimes
e-\Upsilon $. Then for each $ a\in A $ we have
\begin{equation*}\begin{split}
a\cdot\Phi-\Phi\cdot a&=a\cdot(e\otimes e-\Upsilon)-(e\otimes
e-\Upsilon)\cdot a\\&= a\otimes e-a\cdot\Upsilon-e\otimes
a+\Upsilon\cdot a\\&= a\otimes e-(a\otimes e)\Upsilon-e\otimes
a+(e\otimes a)\Upsilon\\&= (a\otimes e-e\otimes a)(e\otimes
e-\Upsilon)=0,
\end{split}\end{equation*}
and
\begin{equation*}
\pi_{A}^{\ast\ast}(\Phi)=\pi_{A}^{\ast\ast}(e\otimes
e)-\pi_{A}^{\ast\ast}(\Upsilon)=e,
\end{equation*}
which implies  that $ \Phi $ is a virtual diagonal for $ A $.
Therefore $A$ is amenable.
\end{proof}
\begin{Example}
Let $S$ be a bicyclic semigroup, that is, a semigroup generated by
two elements $p$ and $q$ such that $pq=e\neq qp$, where $e$ is the identity for
$S$, see \cite[Example 2.10]{ber}. It is well-known that  $\ell^{1}(S)$ is not amenable \cite[Remarks (1)]{dun1978}. But since
$\ell^{1}(S)$ is unital, Theorem \ref{jons imply amen} implies that
$\ell^{1}(S)$ is not Johnson pseudo-contractible.
\end{Example}
\begin{Remark}
It is well-known that a Banach  algebra  $A$ is amenable if and only
if $A^\sharp$ (unitization of $A$) is amenable, see \cite[Corollary
2.3.11]{run}. This property is not valid  in Johnson
pseudo-contractibility case. To see this let $S=\mathbb{N}\cup\{0\}$.
With the following action
\begin{eqnarray*}
&\textit{$m\ast
n=$}\begin{cases}m\,\,\,\,\,\,\,\,\,\,\,\,\,\,\,\,\,\ if\qquad
m=n\cr  0\,\,\,\,\,\,\,\,\,\,\,\,\,\,\,\,\,\,\,\,if\qquad m\neq n,
\end{cases}\\
\end{eqnarray*}   $S$ becomes a
semigroup. By \cite[Corollary 2.7]{rost1} $\ell^{1}(S)$
is pseudo-contractible. Thus by \cite[Lemma 2.2]{sah1} $\ell^{1}(S)$
is Johnson pseudo-contractible. We claim that $\ell^{1}(S)^\sharp$
is not Johnson pseudo-contractible. Suppose conversely that
$\ell^{1}(S)^\sharp$ is Johnson pseudo-contractible. Then by Theorem
\ref{jons imply amen},  $\ell^{1}(S)^\sharp$ is amenable. So
$\ell^{1}(S)$ is amenable and \cite [Theorem 2]{ddun-pat} implies that the
set of idempotents of $S$ is  finite, a contradiction.
\end{Remark}

A semigroup $S$ is called regular if for each $s\in S$ there exists
an element $t\in S$ such that $sts=s$ and $tst=t$. The set of idempotents
of semigroup $S$ is  denoted by $E_{S}.$
\begin{cor}\label{cor regular}
Let $S$ be a semigroup with identity. If $\ell^{1}(S) $ is
Johnson-pseudo-contractible, then $S$ is regular and $E_{S}$ is
finite.
\end{cor}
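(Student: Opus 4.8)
The plan is to reduce the statement to the amenable case and then quote the known structure theory of amenable semigroup algebras. Since $S$ has an identity, the point mass at that identity is an identity for $\ell^{1}(S)$, so $\ell^{1}(S)$ is a unital Banach algebra. Hence Theorem \ref{jons imply amen} applies and shows that, under the hypothesis, $\ell^{1}(S)$ is amenable. Everything after this is a purely semigroup-theoretic consequence of amenability of $\ell^{1}(S)$, so the unitality assumption is used only to license this first reduction.

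For the finiteness of $E_{S}$ I would argue exactly as in the Remark preceding this corollary: amenability of $\ell^{1}(S)$ forces $E_{S}$ to be finite by \cite[Theorem 2]{ddun-pat}. For regularity of $S$, I would invoke the known fact that amenability of $\ell^{1}(S)$ implies that $S$ is a regular semigroup; combining this with the previous point yields the assertion. (One could alternatively recall that an amenable $\ell^{1}(S)$ automatically has a bounded approximate identity and feed this into the standard idempotent/quotient arguments, but the quickest route is to cite the structure theorem directly.)

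Since this is a corollary, there is essentially no proof obstacle beyond Theorem \ref{jons imply amen} itself; the only thing to be careful about is to quote the semigroup-theoretic inputs in precisely the form ``$\ell^{1}(S)$ amenable $\Rightarrow$ $S$ regular and $E_{S}$ finite'', and to make sure the unitality hypothesis (which is exactly what allows Theorem \ref{jons imply amen} to be invoked) is genuinely used rather than some weaker condition. No estimates or delicate homological arguments are needed here.
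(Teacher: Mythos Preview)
Your proposal is correct and follows exactly the paper's own argument: pass from unitality of $\ell^{1}(S)$ to amenability via Theorem \ref{jons imply amen}, then invoke \cite[Theorem 2]{ddun-pat}. The only cosmetic difference is that the paper cites \cite[Theorem 2]{ddun-pat} once for both conclusions (regularity of $S$ and finiteness of $E_{S}$), whereas you split them; since that same theorem yields both, you can simply cite it once rather than treating regularity as a separate ``known fact''.
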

\begin{proof}
Let $S$ be a semigroup with identity. Then $\ell^{1}(S) $ is unital.
Using Theorem \ref{jons imply amen}, Johnson pseudo-contractibility
of  $\ell^{1}(S) $ implies that $\ell^{1}(S)$ is amenable. Applying
\cite[Theorem 2]{ddun-pat}, $S$ must be regular and $E_{S}$ is
finite.
\end{proof}
\begin{Theorem}\label{dual}
Let $ A $ be a  Banach algebra and let $ A^{\ast\ast} $ be Johnson
pseudo-contractible. Then $ A $ is Johnson pseudo-contractible.
\end{Theorem}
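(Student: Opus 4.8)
The plan is to take a net $(M_\alpha)$ in $(A^{\ast\ast}\otimes_{p}A^{\ast\ast})^{\ast\ast}$ witnessing the Johnson pseudo-contractibility of $A^{\ast\ast}$ — so that $F\cdot M_\alpha=M_\alpha\cdot F$ and $\pi^{\ast\ast}_{A^{\ast\ast}}(M_\alpha)F-F\to 0$ for every $F\in A^{\ast\ast}$ — and to push it down to a suitable net in $(A\otimes_{p}A)^{\ast\ast}$. Write $\jmath_{X}\colon X\to X^{\ast\ast}$ for the canonical embedding of a Banach space $X$, and let $Q_{X}\colon X^{(4)}\to X^{\ast\ast}$ denote the canonical projection $\jmath_{X^{\ast}}^{\ast}$. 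Reusing the map $\psi\colon A^{\ast\ast}\otimes_{p}A^{\ast\ast}\to (A\otimes_{p}A)^{\ast\ast}$ of \cite[Lemma 1.7]{ghah} already employed in the proof of Theorem \ref{jons imply amen}, I would set
\[
\Psi:=Q_{A\otimes_{p}A}\circ\psi^{\ast\ast}\colon (A^{\ast\ast}\otimes_{p}A^{\ast\ast})^{\ast\ast}\longrightarrow (A\otimes_{p}A)^{\ast\ast},\qquad m_\alpha:=\Psi(M_\alpha),
\]
and claim that $(m_\alpha)$ witnesses the Johnson pseudo-contractibility of $A$.

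The two properties of $\Psi$ that are needed are: (1) $\Psi$ is an $A$-bimodule morphism, where $A$ acts on $(A^{\ast\ast}\otimes_{p}A^{\ast\ast})^{\ast\ast}$ by restriction of the $A^{\ast\ast}$-action; and (2) $\pi^{\ast\ast}_{A}\circ\Psi=Q_{A}\circ\pi^{\ast\ast}_{A^{\ast\ast}}$. Property (1) follows by taking second adjoints in property (ii) of $\psi$ (adjoints of module morphisms are module morphisms, and $Q_{A\otimes_{p}A}$, being the adjoint of the canonical embedding $(A\otimes_{p}A)^{\ast}\hookrightarrow(A\otimes_{p}A)^{\ast\ast\ast}$, is an $A$-bimodule morphism), together with the standard identification of the $A$-bimodule structure on $(A^{\ast\ast}\otimes_{p}A^{\ast\ast})^{\ast\ast}$ obtained by restriction with the canonical bidual structure of the restricted $A$-bimodule $A^{\ast\ast}\otimes_{p}A^{\ast\ast}$. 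Property (2) follows by taking second adjoints in property (iii) of $\psi$, i.e. $\pi^{(4)}_{A}\circ\psi^{\ast\ast}=\pi^{\ast\ast}_{A^{\ast\ast}}$, combined with the naturality identity $\pi^{\ast\ast}_{A}\circ Q_{A\otimes_{p}A}=Q_{A}\circ\pi^{(4)}_{A}$ (which holds for any bounded map in place of $\pi_{A}$). Finally I will use the elementary identity $Q_{A}\bigl(N\,\square\,\jmath_{A^{\ast\ast}}(G)\bigr)=Q_{A}(N)\,\square\,G$ for $N\in A^{(4)}$, $G\in A^{\ast\ast}$ (both $\square$ being first Arens products); unwinding the definitions, pairing the left side with $f\in A^{\ast}$ reduces — via $\jmath_{A^{\ast\ast}}(G)\cdot\jmath_{A^{\ast}}(f)=\jmath_{A^{\ast}}(Gf)$ in $A^{\ast\ast\ast}$ — to $Q_{A}(N)(Gf)$, which is exactly $\langle Q_{A}(N)\,\square\,G,f\rangle$.

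Granting these, the verification is short. For $a\in A$, property (1) and the identity $\jmath_{A}(a)\cdot M_\alpha=M_\alpha\cdot\jmath_{A}(a)$ give $a\cdot m_\alpha=\Psi(\jmath_{A}(a)\cdot M_\alpha)=\Psi(M_\alpha\cdot\jmath_{A}(a))=m_\alpha\cdot a$. For the remaining condition, put $N_\alpha:=\pi^{\ast\ast}_{A^{\ast\ast}}(M_\alpha)\in A^{(4)}$, so that property (2) gives $\pi^{\ast\ast}_{A}(m_\alpha)=Q_{A}(N_\alpha)$. The hypothesis with $F=\jmath_{A}(a)$ yields $N_\alpha\,\square\,\jmath_{A^{\ast\ast}}(\jmath_{A}(a))-\jmath_{A^{\ast\ast}}(\jmath_{A}(a))\to 0$ in $A^{(4)}$; applying the contraction $Q_{A}$, using the displayed identity of the previous paragraph and $Q_{A}\circ\jmath_{A^{\ast\ast}}=\mathrm{id}_{A^{\ast\ast}}$, gives $Q_{A}(N_\alpha)\,\square\,\jmath_{A}(a)-\jmath_{A}(a)\to 0$ in $A^{\ast\ast}$, that is, $\pi^{\ast\ast}_{A}(m_\alpha)a-a\to 0$. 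Thus $(m_\alpha)$ witnesses the Johnson pseudo-contractibility of $A$. I expect the only real difficulty to be the bookkeeping: checking that passing to second adjoints preserves the module-morphism and $\pi$-compatibility of $\psi$ relative to the correct Arens module structures, and verifying the $Q_{A}$-identity; no new idea beyond recycling $\psi$ and the canonical projections seems to be required.
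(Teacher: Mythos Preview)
Your proposal is correct and follows essentially the same approach as the paper: the map $\Psi=Q_{A\otimes_{p}A}\circ\psi^{\ast\ast}$ is exactly the paper's $\theta^{\ast}\circ\psi^{\ast\ast}$ (since $Q_{A\otimes_{p}A}=\jmath_{(A\otimes_{p}A)^{\ast}}^{\ast}=\theta^{\ast}$), and your net $m_\alpha$ coincides with the paper's $N_\alpha$. The only cosmetic difference is that you verify $\pi_A^{\ast\ast}(m_\alpha)a\to a$ via the functorial identities $\pi_A^{\ast\ast}\circ Q_{A\otimes_pA}=Q_A\circ\pi_A^{(4)}$ and $Q_A(N\,\square\,\jmath_{A^{\ast\ast}}(G))=Q_A(N)\,\square\,G$, whereas the paper reaches the same conclusion by a weak$^\ast$-approximation computation.
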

\begin{proof}
Since  $ A^{\ast\ast} $ is Johnson pseudo-contractible, there exists
a net $( M_{\alpha})\subseteq
(A^{\ast\ast}\otimes_{p}{A}^{\ast\ast})^{\ast\ast} $ such that $
F\cdot M_{\alpha}=M_{\alpha}\cdot F $ and $
F\cdot\pi_{A^{\ast\ast}}^{\ast\ast}(M_{\alpha})\longrightarrow F $
for every $ F\in A^{\ast\ast} $. In particular for $ a\in A $ we
have
\begin{equation*}
a\cdot M_{\alpha}=M_{\alpha}\cdot a \qquad and\qquad
a\cdot\pi_{A^{\ast\ast}}^{\ast\ast}(M_{\alpha})\longrightarrow a.
\end{equation*}
Let $ \psi:A^{\ast\ast}\otimes_{p}{A}^{\ast\ast}\longrightarrow
(A\otimes_{p} A)^{\ast\ast} $ be as in the proof of  Theorem \ref{jons imply amen} and let
 $ \theta: (A\otimes_{p}{A})^{\ast}\longrightarrow
(A\otimes_{p}{A})^{\ast\ast\ast}$ be the canonical embedding map.
Both $ \psi^{\ast\ast} $ and $ \theta^{\ast} $ are weak*
continuous $ A$-bimodule morphisms. Set $
N_{\alpha}=\theta^{\ast}\psi^{\ast\ast}(M_{\alpha})\in
(A\otimes_{p} A)^{\ast\ast}  $. For each $a\in A$ we have
\begin{equation*}
a\cdot
N_{\alpha}=a\cdot\theta^{\ast}\psi^{\ast\ast}(M_{\alpha})=\theta^{\ast}\psi^{\ast\ast}(a\cdot
M_{\alpha})=\theta^{\ast}\psi^{\ast\ast}(M_{\alpha}\cdot
a)=\theta^{\ast}\psi^{\ast\ast}(M_{\alpha})\cdot a=N_{\alpha}\cdot
a.
\end{equation*}
Now for every $ \alpha $ we choose ($ x_{\beta})\subseteq
A^{\ast\ast}\otimes_{p}{A}^{\ast\ast} $ such that $
w^{\ast}-\lim\limits_{\beta} x_{\beta}=M_{\alpha} $, for each $ a $ in $ A $ we have
\begin{equation*}
\begin{split}
 a\cdot \pi_{A}^{\ast\ast}(N_{\alpha})=&a\cdot \pi_{A}^{\ast\ast}(\theta^{\ast}\psi^{\ast\ast}(M_{\alpha}))
 \\=& a\cdot w^{\ast}-\lim\limits_{\beta}\pi_{A}^{\ast\ast}(\theta^{\ast}\psi^{\ast\ast}(\hat{x}_{\beta}))
 \\=& a\cdot w^{\ast}-\lim\limits_{\beta}\pi_{A}^{\ast\ast}(\theta^{\ast}(\widetilde{\psi(x_{\beta})}))
 \\=& a\cdot w^{\ast}-\lim\limits_{\beta}\pi_{A}^{\ast\ast}(\psi(x_{\beta}))
 \\=& a\cdot w^{\ast}-\lim\limits_{\beta}\pi_{A^{\ast\ast}}^{\ast\ast}(x_{\beta})
 \\=& a\cdot\pi_{A^{\ast\ast}}^{\ast\ast}(M_{\alpha}),
\end{split}
\end{equation*}
where $ \widehat{\cdot}:A^{\ast\ast}\otimes_{p}{A}^{\ast\ast}\to (A^{\ast\ast}\otimes_{p}{A}^{\ast\ast})^{**} $ and $ \widetilde{\cdot}:(A\otimes_{p}{A})^{\ast\ast}\to (A\otimes_{p}{A})^{\ast\ast\ast\ast}$ are the canonical embedding maps, respectively.  Thus $ a\cdot
\pi_{A}^{\ast\ast}(N_{\alpha})=a\cdot\pi_{A^{\ast\ast}}^{\ast\ast}(M_{\alpha})\longrightarrow
a $. It follows that $A$ is Johnson pseudo-contractible.
\end{proof}
\begin{Example}
Consider the semigroup $\mathbb{N}_{\vee}$ with the semigroup
multiplication $m\vee n=\max\{m,n\}$ for each $m,n\in \mathbb{N}.$
We claim that $\ell^{1}(\mathbb{N}_{\vee})$ is not Johnson
pseudo-contractible. Since $\ell^{1}(\mathbb{N}_{\vee})$ is unital, if $\ell^{1}(\mathbb{N}_{\vee})$
is Johnson pseudo-contractible, then by Corollary \ref{cor regular} $E_{\mathbb{N}_{\vee}}$
is finite which is a contradiction. Furthermore by \cite[Example
4.6]{generalize2} we know that $\ell^{1}(\mathbb{N}_{\vee})$ is
approximately amenable. Since $\ell^{1}(\mathbb{N}_{\vee})$ is
unital, \cite[Proposition 3.2]{ghah-pse} implies that
$\ell^{1}(\mathbb{N}_{\vee})$ is pseudo-amenable. Therefore we have
a Banach algebra among semigroup algebras which is not Johnson
pseudo-contractible but it is pseudo-amenable and
approximately amenable.
Moreover, by Theorem \ref{dual} $\ell^{1}(\mathbb{N}_{\vee})^{**}$ is not
Johnson pseudo-contractible.
\end{Example}
\section{Property $\digamma$ and Johnson pseudo-contractibility}\label{sec2}

A Banach algebra $A$ has  property $\digamma$ if there exists a not necessary bounded net  $(\rho_{\alpha})_{\alpha\in I}\subseteq B(A,(A\otimes_{p}A)^{**})$ of $A$-bimodule morphisms such that
$$\pi_{A}^{**}\circ\rho_{\alpha}(a)-a\rightarrow 0,\quad (a\in A).$$
\begin{Proposition}\label{F property}
Let $A$ be a Banach algebra with a central approximate identity. Then $A$
is Johnson pseudo-contractible if and only if $A$ has  property
$\digamma$.
\end{Proposition}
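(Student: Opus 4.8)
The plan is to prove the two implications separately, using the central approximate identity $(e_\beta)$ to bridge between the "identity-on-$A$" conditions that appear in Johnson pseudo-contractibility and property $\digamma$.

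For the forward implication, suppose $A$ is Johnson pseudo-contractible, so there is a net $(m_\alpha)\subseteq (A\otimes_p A)^{**}$ with $a\cdot m_\alpha=m_\alpha\cdot a$ and $\pi_A^{**}(m_\alpha)a-a\to 0$ for all $a\in A$. The natural candidate for $\rho_\alpha$ is right (or two-sided) multiplication by $m_\alpha$: define $\rho_\alpha(a)=a\cdot m_\alpha=m_\alpha\cdot a$. Since $m_\alpha$ commutes with $A$, a short computation shows each $\rho_\alpha:A\to (A\otimes_p A)^{**}$ is an $A$-bimodule morphism, and it is bounded for fixed $\alpha$ (though the family need not be uniformly bounded, which is exactly why property $\digamma$ is stated without a bound). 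Then $\pi_A^{**}(\rho_\alpha(a))=\pi_A^{**}(m_\alpha\cdot a)=\pi_A^{**}(m_\alpha)a$, so $\pi_A^{**}\circ\rho_\alpha(a)-a=\pi_A^{**}(m_\alpha)a-a\to 0$, giving property $\digamma$. Note this direction does not even use the approximate identity.

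For the converse, suppose $A$ has property $\digamma$ via $(\rho_\alpha)_{\alpha\in I}\subseteq B(A,(A\otimes_p A)^{**})$. Let $(e_\beta)$ be the central approximate identity. The idea is to set $m_{(\alpha,\beta)}=\rho_\alpha(e_\beta)$ and index by the product directed set. Because $\rho_\alpha$ is an $A$-bimodule morphism and $e_\beta$ is central, $a\cdot m_{(\alpha,\beta)}=a\cdot\rho_\alpha(e_\beta)=\rho_\alpha(ae_\beta)=\rho_\alpha(e_\beta a)=\rho_\alpha(e_\beta)\cdot a=m_{(\alpha,\beta)}\cdot a$, so the commutation condition holds exactly. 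For the second condition we estimate
\begin{equation*}
\pi_A^{**}(m_{(\alpha,\beta)})a-a=\pi_A^{**}(\rho_\alpha(e_\beta))\,a-a=\big(\pi_A^{**}(\rho_\alpha(e_\beta))-e_\beta\big)a+\big(e_\beta a-a\big).
\end{equation*}
The second summand tends to $0$ along $\beta$ since $(e_\beta)$ is an approximate identity. For the first summand, property $\digamma$ gives $\pi_A^{**}(\rho_\alpha(e_\beta))-e_\beta\to 0$ along $\alpha$ for each fixed $\beta$, and right multiplication by the fixed element $a$ is bounded, so $\big(\pi_A^{**}(\rho_\alpha(e_\beta))-e_\beta\big)a\to 0$ along $\alpha$. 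A standard iterated-limit argument (passing to a subnet of the product net $I\times\{(e_\beta)\}$, or invoking the convergence-of-nets-on-a-product-directed-set lemma) then produces a single net $(m_\gamma)$ with $a\cdot m_\gamma=m_\gamma\cdot a$ and $\pi_A^{**}(m_\gamma)a-a\to 0$ for every $a\in A$, i.e.\ $A$ is Johnson pseudo-contractible.

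The main obstacle is the converse direction, specifically organizing the double limit correctly: the convergence $\pi_A^{**}(\rho_\alpha(e_\beta))-e_\beta\to 0$ is only \emph{iterated} (first $\alpha$, then $\beta$), not joint, and the net $(\rho_\alpha)$ carries no uniform bound, so one cannot simply diagonalize crudely. The clean fix is to work in the directed set $I\times B$ and, for each pair $(\alpha,\beta)$ and each finite subset $F$ of $A$ together with $\varepsilon>0$, choose indices making $\max_{a\in F}\norm{\pi_A^{**}(m_{(\alpha,\beta)})a-a}<\varepsilon$; this reindexing by $(F,\varepsilon)$-data yields the desired net. One should also double-check that $\rho_\alpha$ being merely a bimodule morphism (not assumed bounded as a family) causes no trouble — it does not, because at each stage only one fixed $\rho_\alpha$ and one fixed $a$ are used, and both are individually bounded.
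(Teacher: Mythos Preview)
Your proposal is correct and follows essentially the same route as the paper's proof: for the forward direction you set $\rho_\alpha(a)=m_\alpha\cdot a$ exactly as the paper does (and you correctly note this direction needs no approximate identity, matching the paper's remark), and for the converse you take $m_{(\alpha,\beta)}=\rho_\alpha(e_\beta)$, verify exact commutation via centrality of $e_\beta$ and the bimodule morphism property, and then combine the two iterated limits into a single net by the same iterated-limit argument the paper invokes (citing Kelley). The only cosmetic difference is that you split $\pi_A^{**}(\rho_\alpha(e_\beta))a-a$ into two summands before passing to the iterated limit, whereas the paper computes $\lim_\beta\lim_\alpha$ directly; the content is identical.
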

\begin{proof}
Suppose that $A$ is Johnson pseudo-contractible. Then there exists a net
$(m_{\alpha})_{\alpha \in I}$ in $(A\otimes_{p}A)^{**}$ such that
$a\cdot m_{\alpha}=m_{\alpha}\cdot a$ and
$\pi^{**}_{A}(m_{\alpha})a-a\rightarrow 0$ for every $a\in A.$
Define $\rho_{\alpha}:A\rightarrow (A\otimes_{p}A)^{**}$ by
$\rho_{\alpha}(a)=m_{\alpha}\cdot a $ for every $a\in A.$ Clearly
$(\rho_{\alpha})$ is a net of $A$-bimodule morphisms in $ B(A,(A\otimes_{p}A)^{**}) $ such that
$$\pi_{A}^{**}\circ\rho_{\alpha}(a)-a=\pi^{**}_{A}(m_{\alpha}\cdot a)-a=\pi^{**}_{A}(m_{\alpha})a-a\rightarrow 0\quad (a\in A).$$

Conversely, suppose that $A$ has  property $\digamma$. Then there
exists  a net $(\rho_{\alpha})_{\alpha\in I}\subseteq B(A,(A\otimes_{p}A)^{**})$ such that
$$\pi_{A}^{**}\circ\rho_{\alpha}(a)-a\rightarrow 0,\quad (a\in A).$$ Let $(e_{\beta})_{\beta \in J}$ be a central approximate
identity. Define $(m^{\beta}_{\alpha})_{(\alpha,\beta)\in (I,J)}$ by
$m^{\beta}_{\alpha}=\rho_{\alpha}(e_{\beta})$. It is easy to see
that $$a\cdot m^{\beta}_{\alpha}-m^{\beta}_{\alpha}\cdot
a=a\cdot\rho_{\alpha}(e_{\beta})-\rho_{\alpha}(e_{\beta})\cdot
a=\rho_{\alpha}(ae_{\beta})-\rho_{\alpha}(e_{\beta}a)=0 $$ and
$$\lim_{\beta}\lim_{\alpha}\pi^{**}_{A}(m^{\beta}_{\alpha})a-a=\lim_{\beta}\lim_{\alpha}\pi^{**}_{A}\circ\rho_{\alpha}(e_{\beta})a-a=\lim_{\beta}e_{\beta}a-a=0,$$
for every $a\in A.$ Using iterated limit theorem \cite[page
69]{kel}, we can find a net $n_{\gamma}\in (A\otimes_{p}A)^{**}$
such that
$$a\cdot n_{\gamma}=n_{\gamma}\cdot a,\quad
\pi^{**}_{A}(n_{\gamma})a-a\rightarrow 0\qquad (a\in A).$$ So $A$ is
Johnson pseudo-contractible.
\end{proof}
Note that the existence of central approximate identity in the ``if part'' of previous proposition is not necessary.
\begin{Remark}\label{rem wot}
We recall that if $E$ and $F$ are Banach spaces, then the weak$^{*}$ operator topology on $B(E,F^{*})$ is the locally convex topology determined by the seminorms  $\{p_{e,f}:e\in E, f\in F^{*}\}$, where $p_{e,f}(T)=|\langle f,Te\rangle|$. Moreover closed balls of $B(E,F^*)$ are compact with respect to the  weak$^{*}$ operator topology (see \cite[Theorem A.3.35]{auto-dale}).
\end{Remark}
\begin{Remark}
Note that in the definition of property $\digamma$ if $
(\rho_{\alpha})_{\alpha} $ is a bounded net, then $ A $ is already
biflat, that is, there exists a bounded $A$-bimodule morphism
$\rho:A\rightarrow (A\otimes_{p}A)^{**}$ such that
$\pi^{**}_{A}\circ\rho (a)=a$ for each $a\in A$. To see this, by Remark \ref{rem wot} the bounded net $
(\rho_{\alpha})_{\alpha} $ has a
 cluster point $ \rho\in B(A, (A\otimes_{p}A)^{**}) $ with respect to the weak* operator topology.
Note that
$$  b\cdot \rho(a)=w^{\ast}-\lim
b\cdot\rho_{\alpha}(a)=w^{\ast}-\lim\rho_{\alpha}(ba)=\rho(ba)$$ and
$$
 \rho(a)\cdot b=w^{\ast}-\lim\rho_{\alpha}(a)\cdot b=w^{\ast}-\lim\rho_{\alpha}(ab)=\rho(ab)\quad (a,b\in
 A).$$ Thus $\rho $ is a bounded $A$-bimodule morphism.
Moreover since $\pi^{**}_{A}$ is a $w^{*}$-continuous map, we have $
\pi^{**}_{A}\circ\rho(a)=w^{\ast}-\lim\pi_{A}^{**}\circ\rho_{\alpha}(a)=a
$ for every $a\in A.$ It follows that $A$ is a biflat Banach
algebra.
\end{Remark}
Combining Proposition \ref{F property} with Theorem \ref{jons imply
amen}, we have the following corollary:
\begin{cor}
Let $A$ be a Banach algebra with an identity. Then $A$ is amenable
if and only if $A$ has property $\digamma$.
\end{cor}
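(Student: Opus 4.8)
The plan is to chain together the two main results already established, namely Proposition \ref{F property} and Theorem \ref{jons imply amen}, together with the elementary observation that an amenable (in particular, unital) Banach algebra is automatically Johnson pseudo-contractible.

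First I would record that if $A$ has an identity $e$, then the constant net $(e)$ is a central approximate identity for $A$, so Proposition \ref{F property} applies verbatim: $A$ is Johnson pseudo-contractible if and only if $A$ has property $\digamma$. This reduces the corollary to proving that, for a unital Banach algebra, amenability is equivalent to Johnson pseudo-contractibility.

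For the implication ``amenable $\Rightarrow$ Johnson pseudo-contractible'', I would take a virtual diagonal $M\in(A\otimes_{p}A)^{**}$, so that $a\cdot M=M\cdot a$ and $\pi_{A}^{**}(M)a=a$ for every $a\in A$, and simply use the constant net $m_{\alpha}=M$; this trivially satisfies the requirements in the definition of Johnson pseudo-contractibility (this direction does not even need the identity). For the converse, ``Johnson pseudo-contractible $\Rightarrow$ amenable'', I would invoke Theorem \ref{jons imply amen} directly, since $A$ is assumed unital.

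Putting these together: if $A$ is amenable then it is Johnson pseudo-contractible, hence by Proposition \ref{F property} it has property $\digamma$; conversely, if $A$ has property $\digamma$ then by Proposition \ref{F property} it is Johnson pseudo-contractible, hence by Theorem \ref{jons imply amen} it is amenable. There is no genuine obstacle here, since all the work has been carried out in the preceding results; the only points needing a moment's care are checking that Proposition \ref{F property} is applicable (a unital algebra has a central approximate identity) and that a virtual diagonal furnishes, as a constant net, exactly the data required in the definition of Johnson pseudo-contractibility.
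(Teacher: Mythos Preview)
Your proposal is correct and follows exactly the approach the paper indicates: the corollary is stated immediately after the sentence ``Combining Proposition \ref{F property} with Theorem \ref{jons imply amen}, we have the following corollary,'' and your write-up simply spells out that combination together with the trivial observation that a virtual diagonal yields (as a constant net) the data for Johnson pseudo-contractibility.
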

Let $ G $ be a locally compact group. A linear subspace $S^{1}(G)$ of $L^{1}(G)$ is said to be a Segal
algebra on $G$ if it satisfies the following conditions
\begin{enumerate}
\item [(i)] $S^{1}(G)$ is  dense    in $L^{1}(G)$,
\item [(ii)]  $S^{1}(G)$ with a norm $||\cdot||_{S^{1}(G)}$ is
a Banach space and $|| f||_{L^{1}(G)}\leq|| f||_{S^{1}(G)}$ for
every $f\in S^{1}(G)$,
\item [(iii)]  $ S^{1}(G) $ is left translation invariant (i.e. $L_{y}f\in S^{1}(G)$ for every $f\in S^{1}(G)$ and $y\in G$) and the map $y\mapsto L_{y} f$ from $G$ into $S^{1}(G)$ is continuous, where
$L_{y}f(x)=f(y^{-1}x)$,
\item [(iv)] $||L_{y}f||_{S^{1}(G)}=||f||_{S^{1}(G)}$ for every $f\in
S^{1}(G)$ and $y\in G$,
\end{enumerate}
for more information about Segal algebras, see \cite{rei}.

The group $G$ is said to have small invariant neighborhoods, denoted by $SIN$-group if in  every neighborhood of the identity there exists a compact invariant neighborhood of the identity.
\begin{Theorem}
Let $ G $ be an $SIN$-group. Then the following statements are equivalent:
\begin{enumerate}
\item [(i)] G is an amenable group,
\item [(ii)] $S^{1}(G)$ is Johnson pseudo-contractible,
\item [(iii)] $S^{1}(G)$ has property $\digamma.$
\end{enumerate}
\end{Theorem}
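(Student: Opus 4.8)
The plan is to establish $(i)\Rightarrow(iii)$, $(iii)\Rightarrow(i)$ and $(ii)\Leftrightarrow(iii)$, which together yield the three-way equivalence. The equivalence $(ii)\Leftrightarrow(iii)$ will be nothing but Proposition \ref{F property} applied to $A=S^{1}(G)$; the one thing to justify is that for an $SIN$-group $G$ the Segal algebra $S^{1}(G)$ possesses a central approximate identity which is bounded in the $L^{1}$-norm (by Reiter's theorem \cite{rei} $S^{1}(G)$ has an $L^{1}$-bounded approximate identity, and for $SIN$-groups it can be chosen central), and such a net $(e_{\beta})$ is then automatically an approximate identity for $L^{1}(G)$ as well. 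Throughout I would use the canonical norm-decreasing algebra homomorphism $\iota\colon S^{1}(G)\hookrightarrow L^{1}(G)$ with dense range, the standing inclusions $L^{1}(G)*S^{1}(G)\subseteq S^{1}(G)$ and $S^{1}(G)*L^{1}(G)\subseteq S^{1}(G)$ (so that $S^{1}(G)\otimes_{p}S^{1}(G)$ is an $L^{1}(G)$-bimodule), and the $L^{1}(G)$-bimodule morphism $j=\iota\otimes\iota\colon S^{1}(G)\otimes_{p}S^{1}(G)\to L^{1}(G)\otimes_{p}L^{1}(G)$, which satisfies $\pi_{L^{1}(G)}\circ j=\iota\circ\pi_{S^{1}(G)}$.

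For $(i)\Rightarrow(iii)$: if $G$ is amenable then $L^{1}(G)$ is amenable, hence biflat \cite{run}, so there is a bounded $L^{1}(G)$-bimodule morphism $\rho\colon L^{1}(G)\to(L^{1}(G)\otimes_{p}L^{1}(G))^{**}$ with $\pi_{L^{1}(G)}^{**}\circ\rho=\mathrm{id}$. For each pair $\beta,\gamma$ the map $x\mapsto e_{\beta}\cdot x\cdot e_{\gamma}$ is a bounded linear map $L^{1}(G)\otimes_{p}L^{1}(G)\to S^{1}(G)\otimes_{p}S^{1}(G)$, and since $(e_{\beta})$ commutes with the elements of $S^{1}(G)$ its second transpose $\theta_{\beta,\gamma}$ is an $S^{1}(G)$-bimodule morphism; I would set $\rho_{\beta,\gamma}=\theta_{\beta,\gamma}\circ\rho\circ\iota\colon S^{1}(G)\to(S^{1}(G)\otimes_{p}S^{1}(G))^{**}$. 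A short computation using $\pi_{L^{1}(G)}^{**}\circ\rho=\mathrm{id}$ and the identity $\pi_{S^{1}(G)}(e_{\beta}\cdot x\cdot e_{\gamma})=e_{\beta}*\pi_{L^{1}(G)}(x)*e_{\gamma}$ shows that each $\rho_{\beta,\gamma}$ is an $S^{1}(G)$-bimodule morphism with $\pi_{S^{1}(G)}^{**}(\rho_{\beta,\gamma}(a))=e_{\beta}*a*e_{\gamma}\to a$ in $S^{1}(G)$; passing from the double net $(\rho_{\beta,\gamma})$ to a genuine net by the iterated limit theorem (exactly as in the proof of Proposition \ref{F property}) shows that $S^{1}(G)$ has property $\digamma$.

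For $(iii)\Rightarrow(i)$: let $(\rho_{\alpha})$ be a net of $S^{1}(G)$-bimodule morphisms $S^{1}(G)\to(S^{1}(G)\otimes_{p}S^{1}(G))^{**}$ with $\pi_{S^{1}(G)}^{**}\circ\rho_{\alpha}(a)\to a$. The first observation I would make is that, although a priori only $S^{1}(G)$-bilinear, each $\rho_{\alpha}$ satisfies $\rho_{\alpha}(a*b)=a\cdot\rho_{\alpha}(b)$ and $\rho_{\alpha}(b*a)=\rho_{\alpha}(b)\cdot a$ for all $a\in L^{1}(G)$, $b\in S^{1}(G)$: approximate $a$ by elements of $S^{1}(G)$ in $L^{1}$-norm and use that $g\mapsto g*b$ is bounded from $L^{1}(G)$ into $S^{1}(G)$ (closed graph theorem) together with continuity of the module actions; in particular $\rho_{\alpha}(e_{\beta})$ commutes with every element of $L^{1}(G)$. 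Setting $N_{\alpha}^{\beta}=j^{**}(\rho_{\alpha}(e_{\beta}))\in(L^{1}(G)\otimes_{p}L^{1}(G))^{**}$, the fact that $j^{**}$ is an $L^{1}(G)$-bimodule morphism gives $a\cdot N_{\alpha}^{\beta}=N_{\alpha}^{\beta}\cdot a$ for every $a\in L^{1}(G)$, while $\pi_{L^{1}(G)}^{**}(N_{\alpha}^{\beta})a=\iota^{**}\big(\pi_{S^{1}(G)}^{**}(\rho_{\alpha}(e_{\beta}*a))\big)$ tends to $e_{\beta}*a$ along $\alpha$ and then to $a$ along $\beta$. The iterated limit theorem now produces a net exhibiting $L^{1}(G)$ as a Johnson pseudo-contractible Banach algebra, whence $G$ is amenable by the corresponding result for group algebras in \cite{sah1} (alternatively, $L^{1}(G)$ is then pseudo-amenable, so $G$ is amenable by \cite{ghah-pse}).

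The hard part is $(iii)\Rightarrow(i)$. The obstruction is that the net witnessing property $\digamma$ of $S^{1}(G)$ need not be bounded, so it cannot simply be transported to $L^{1}(G)$ by extending the maps $\rho_{\alpha}$ by density; what rescues the argument is that, after multiplying by a central approximate-identity element and pushing into $(L^{1}(G)\otimes_{p}L^{1}(G))^{**}$, the bimodule-commutation identity becomes \emph{exact} on all of $L^{1}(G)$, while the remaining defect in the $\pi^{**}$-condition is absorbed by a two-stage limit. A secondary technical point, needed already for $(ii)\Leftrightarrow(iii)$, is the existence of a central (and $L^{1}$-bounded) approximate identity in $S^{1}(G)$ when $G$ is an $SIN$-group.
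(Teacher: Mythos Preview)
Your scheme is sound, and your $(i)\Rightarrow(iii)$ is the paper's $(i)\Rightarrow(ii)$ in different clothing: both sandwich the $L^{1}(G)$ virtual diagonal (equivalently, the biflat splitting) by the Kotzmann--Rindler central approximate identity $(e_{\beta})$ to produce elements of $(S^{1}(G)\otimes_{p}S^{1}(G))^{**}$, and $(ii)\Leftrightarrow(iii)$ is indeed just Proposition~\ref{F property}.

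The genuine divergence is in $(iii)\Rightarrow(i)$. The paper does not transport property~$\digamma$ to $L^{1}(G)$; instead it argues that property~$\digamma$ plus a left approximate identity yields left $\phi$-amenability of $S^{1}(G)$ (adapting \cite{sah-app-bipro}), and then invokes \cite{alagh} to get amenability of $G$. That route is shorter, uses only the left-ideal structure of a Segal algebra, and---as the paper notes immediately afterward---needs no $SIN$ hypothesis. Your route is more self-contained within the Johnson pseudo-contractibility framework but spends the $SIN$ hypothesis a second time via $(e_{\beta})$.

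One point to tighten in your argument: the claim $\rho_{\alpha}(b*a)=\rho_{\alpha}(b)\cdot a$ for $a\in L^{1}(G)$ presupposes $S^{1}(G)*L^{1}(G)\subseteq S^{1}(G)$ and a right $L^{1}(G)$-action on $(S^{1}(G)\otimes_{p}S^{1}(G))^{**}$, neither of which follows from the left-sided Segal-algebra axioms stated here. You do not actually need this: the identity $a\cdot N_{\alpha}^{\beta}=N_{\alpha}^{\beta}\cdot a$ holds for $a\in S^{1}(G)$ by the $S^{1}(G)$-bimodule property of $\rho_{\alpha}$ and $j^{**}$ together with centrality of $e_{\beta}$, and then passes to all $a\in L^{1}(G)$ by density, since the $L^{1}(G)$-actions on $(L^{1}(G)\otimes_{p}L^{1}(G))^{**}$ are norm-continuous in $a$. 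With that adjustment your $(iii)\Rightarrow(i)$ goes through.
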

\begin{proof}
(i)$\Rightarrow$(ii) Since $ G $ is amenable, $ L^{1}(G) $ has a
virtual diagonal, say $ M $. On the other hand since $G$ is $SIN$
group  by \cite[theorem 1]{kot}  $S^{1}(G)$ has a central approximate
identity say $ (e_{\alpha}) $. Recall that $ (e_{\alpha}) $ is $
L^{1} $-norm bounded \cite[\S  8, proposition 1]{rei}. Set $
M_{\alpha}=e_{\alpha}\cdot M\cdot e_{\alpha} $. Then $ M_{\alpha}\in (S^{1}(G)\otimes_{p}S^{1}(G))^{\ast\ast}$.
Moreover for $ f\in S^{1}(G) $ we have
\begin{equation*}
M_{\alpha}\cdot f=f\cdot M_{\alpha},
\end{equation*}
and
\begin{equation*}
\begin{split}
\norm{\pi^{\ast\ast}(M_{\alpha})\cdot f-f}_{S^{1}(G)}\leq
&\norm{e_{\alpha}\cdot\pi^{\ast\ast}(M)\cdot e_{\alpha}\cdot
f-e_{\alpha}\ast f\ast e_{\alpha}}_{S^{1}(G)}
\\&+\norm{e_{\alpha}\ast f\ast e_{\alpha}-e_{\alpha}\ast f}_{S^{1}(G)}+\norm{e_{\alpha}\ast f-f}_{S^{1}(G)}
\\ \leq &\norm{\pi^{\ast\ast}(M)\cdot(e_{\alpha}\ast f\ast e_{\alpha})-e_{\alpha}\ast f\ast e_{\alpha}}_{S^{1}(G)}
\\&+\norm{e_{\alpha}}_{1}\norm{ f\ast e_{\alpha}-f}_{S^{1}(G)}+\norm{e_{\alpha}\ast f-f}_{S^{1}(G)}\longrightarrow 0,
\end{split}
\end{equation*}
where $ \cdot $ is the module action and $ \ast $ is the convolution product.

(ii)$\Rightarrow$(iii) This implication holds by Proposition \ref{F property}.

(iii)$\Rightarrow$(i) Since $S^{1}(G)$ has a left approximate
identity, using similar argument as in the proof of
\cite[Theorem 3.9]{sah-app-bipro}  one can show that if $S^{1}(G)$
has property $\digamma$, then $S^{1}(G)$ is left $\phi$-amenable.
Now applying \cite[Corollary 3.4]{alagh} we see that $G$ is
amenable.
\end{proof}
In fact the case (iii)$\Rightarrow$(i) holds for every locally compact group $ G $.
\begin{cor}
Let $G$ be a locally compact group. If $S^{1}(G)$ has property
$\digamma$, then $G$ is amenable.
\end{cor}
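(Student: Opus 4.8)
The plan is to isolate exactly the part of the preceding proof that did not use the $SIN$ hypothesis. Indeed, in the proof of the theorem above, the implication (iii)$\Rightarrow$(i) rested on only two facts: that $S^{1}(G)$ has property $\digamma$, and that $S^{1}(G)$ possesses a left approximate identity; the $SIN$ assumption was used solely to establish (i)$\Rightarrow$(ii). So the corollary follows once we observe that a left approximate identity is available without any restriction on $G$.

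First I would recall the standard fact that every Segal algebra $S^{1}(G)$ on an arbitrary locally compact group $G$ has a left approximate identity, which is moreover bounded in the $L^{1}$-norm; see \cite[\S 8, Proposition 1]{rei}. This is the only structural input required. Next, following the argument of \cite[Theorem 3.9]{sah-app-bipro} essentially verbatim, I would show that a Banach algebra with a left approximate identity and property $\digamma$ is left $\phi$-amenable; applied to $S^{1}(G)$ this gives left $\phi$-amenability, where $\phi$ denotes the restriction to $S^{1}(G)$ of the augmentation character $f\mapsto\int_{G}f$ on $L^{1}(G)$, which is non-zero and multiplicative on the dense subalgebra $S^{1}(G)$. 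Concretely, one composes the net $(\rho_{\alpha})$ of $A$-bimodule morphisms from property $\digamma$ with the map $\mathrm{id}_{A}\otimes\phi\colon A\otimes_{p}A\to A$, notes that $(\mathrm{id}\otimes\phi)^{\ast\ast}\circ\rho_{\alpha}$ is then a net of left module maps into $A^{\ast\ast}$ with $\phi^{\ast\ast}\bigl((\mathrm{id}\otimes\phi)^{\ast\ast}\rho_{\alpha}(a)\bigr)=\phi^{\ast\ast}\bigl(\pi_{A}^{\ast\ast}\rho_{\alpha}(a)\bigr)\to\phi(a)$, and then evaluates along the left approximate identity to produce an element of $A^{\ast\ast}$ witnessing left $\phi$-amenability (exactly as the iterated-limit passage in Proposition \ref{F property}).

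Finally I would invoke \cite[Corollary 3.4]{alagh}, which asserts that left $\phi$-amenability of $S^{1}(G)$ for this augmentation character forces $G$ to be amenable, completing the proof. The only step needing care is the middle one — verifying that the passage from property $\digamma$ together with a (possibly $S^{1}$-unbounded) left approximate identity to left $\phi$-amenability really does go through — but since the net in the definition of property $\digamma$ is itself not assumed to be bounded, this is precisely the situation treated in \cite[Theorem 3.9]{sah-app-bipro} and introduces no genuinely new difficulty.
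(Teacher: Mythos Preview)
Your proposal is correct and follows essentially the same route as the paper. The paper proves the corollary by remarking that the implication (iii)$\Rightarrow$(i) in the preceding theorem did not use the $SIN$ hypothesis: it relies only on the existence of a left approximate identity in $S^{1}(G)$ (valid for arbitrary $G$), the argument of \cite[Theorem 3.9]{sah-app-bipro} to deduce left $\phi$-amenability from property $\digamma$, and \cite[Corollary 3.4]{alagh} to conclude amenability of $G$---exactly the three ingredients you invoke, with your additional sketch of the $(\mathrm{id}\otimes\phi)^{\ast\ast}\circ\rho_{\alpha}$ construction merely making explicit what the paper leaves to the cited reference.
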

\begin{Remark}
There exists a Banach algebra with property $\digamma$ which
is not Johnson pseudo-contractible. To see this let $A=\left[\begin{array}{cc} 0&\mathbb{C}\\
0&\mathbb{C}\\
\end{array}
\right]$. Define $\rho:A\rightarrow A\otimes_{p}A$ by
$\rho(a)=a\otimes a_{0}$, where $a_{0}=\left[\begin{array}{cc} 0&1\\
0&1\\
\end{array}
\right]$. Clearly $\rho$ is a bounded $A$-bimodule morphism and
$\pi_{A}\circ\rho(a)=a$ for every $a\in A$. It follows that $A$ has
property $\digamma.$ But if $A$ is Johnson pseudo-contractible, then
by \cite[Corollary 2.7]{sah1} $A$ is pseudo-amenable. Thus $A$ has
an approximate identity say $(e_{\alpha})$. So there exists
$a_{\alpha}$ and $b_{\alpha}$ in $\mathbb{C}$ such that
$e_{\alpha}=\left[\begin{array}{cc} 0&a_{\alpha}\\
0&b_{\alpha}\\
\end{array}
\right]$. It follows that $$\left[\begin{array}{cc} 0&0\\
0&0\\
\end{array}
\right]=\left[\begin{array}{cc} 0&a_{\alpha}\\
0&b_{\alpha}\\
\end{array}
\right]\left[\begin{array}{cc} 0&1\\
0&0\\
\end{array}
\right]\rightarrow \left[\begin{array}{cc} 0&1\\
0&0\\
\end{array}
\right],$$ which is impossible.
\end{Remark}
\begin{lemma}
Let $A$ be a Banach algebra with property $\digamma$. Then $A^{2}$
is dense in $A.$
\end{lemma}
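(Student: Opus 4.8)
The plan is to argue by contradiction, invoking only the convergence statement in the definition of property $\digamma$; neither the boundedness of the net $(\rho_\alpha)$ nor the $A$-bimodule structure of the maps $\rho_\alpha$ will be needed. First I would suppose that $A^{2}$ is not dense in $A$ and apply the Hahn--Banach theorem to produce a nonzero $f\in A^{*}$ that annihilates $\overline{A^{2}}$, that is, $f(ab)=0$ for all $a,b\in A$.

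The key step is then to transport this information to the tensor product. Consider $\pi_{A}^{*}(f)\in(A\otimes_{p}A)^{*}$ and note that on an elementary tensor it acts by $\langle\pi_{A}^{*}(f),a\otimes b\rangle=f(\pi_{A}(a\otimes b))=f(ab)=0$; since the elementary tensors span a dense subspace of $A\otimes_{p}A$, this forces $\pi_{A}^{*}(f)=0$. Fixing $a\in A$ and letting $(\rho_{\alpha})_{\alpha\in I}\subseteq B(A,(A\otimes_{p}A)^{**})$ be the net witnessing property $\digamma$, I would then write
\[
\langle\pi_{A}^{**}(\rho_{\alpha}(a)),f\rangle=\langle\rho_{\alpha}(a),\pi_{A}^{*}(f)\rangle=0\qquad(\alpha\in I),
\]
while at the same time $\pi_{A}^{**}(\rho_{\alpha}(a))-a\to 0$ in norm in $A^{**}$ yields $\langle\pi_{A}^{**}(\rho_{\alpha}(a)),f\rangle\to f(a)$. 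Combining these two facts gives $f(a)=0$ for every $a\in A$, contradicting $f\neq 0$, and the lemma follows.

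There is no serious obstacle here; the only point requiring care is the duality bookkeeping, namely correctly identifying the pairing $\langle\pi_{A}^{**}(\rho_{\alpha}(a)),f\rangle=\langle\rho_{\alpha}(a),\pi_{A}^{*}(f)\rangle$ and observing that a functional killing every product automatically kills $\pi_{A}^{*}(f)$ on the dense subspace of elementary tensors, hence on all of $A\otimes_{p}A$.
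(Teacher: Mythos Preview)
Your proof is correct and follows essentially the same route as the paper: assume $\overline{A^{2}}\neq A$, use Hahn--Banach to find a nonzero $f\in A^{*}$ vanishing on $A^{2}$, and then show via property~$\digamma$ that $f(a)=0$ for every $a$. The only cosmetic difference is that you encode ``$f$ kills $A^{2}$'' as $\pi_{A}^{*}(f)=0$ and use the duality $\langle\pi_{A}^{**}(\rho_{\alpha}(a)),f\rangle=\langle\rho_{\alpha}(a),\pi_{A}^{*}(f)\rangle$ directly, whereas the paper unwinds this same identity by weak$^{*}$-approximating $\rho_{\alpha}(a)$ with elements $x_{\beta}^{\alpha}\in A\otimes_{p}A$ and noting $\pi_{A}(x_{\beta}^{\alpha})\in A^{2}$; your formulation is a little cleaner but the underlying argument is identical.
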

\begin{proof}
Since $A$ has property $\digamma$, there exists a net of
$A$-bimodule morphisms $(\rho_{\alpha})\subseteq B(A,(A\otimes_{p}A)^{**})$ such that
$\pi_{A}^{**}\circ\rho_{\alpha}(a)\rightarrow a$ for each $a\in A.$
Suppose by contradiction that $\overline{A^{2}}\neq A$. By the Hahn-Banach
theorem there exists a non-zero element $f\in A^{*}$ such that
$f|_{A^{2}}=0.$ Fix $a\in A$ such that $f(a)\neq 0.$ Let
$(x^{\alpha}_{\beta})_{\beta}$ be a net in $A\otimes_{p}A$ such that
$x^{\alpha}_{\beta}\xrightarrow{w^{*}}\rho_{\alpha}(a).$ It is worth noting explicitly that $\pi_{A}^{**}(x^{\alpha}_{\beta})\in A^2$. Since
$\pi^{**}_{A}$ is $w^{*}$-continuous map, we have
$\pi_{A}^{**}(x^{\alpha}_{\beta})\xrightarrow{w^{*}}\pi_{A}^{**}\circ\rho_{\alpha}(a)$.
So
$\pi_{A}^{**}(x^{\alpha}_{\beta})(f)\rightarrow\pi_{A}^{**}\circ\rho_{\alpha}(a)(f)$.
Also since $\pi_{A}^{**}\circ\rho_{\alpha}(a)\rightarrow a$, we have
$\pi_{A}^{**}\circ\rho_{\alpha}(a)\xrightarrow{w^{*}} a$, then
$\pi_{A}^{**}\circ\rho_{\alpha}(a)(f)\rightarrow f(a)$, that is,
$0=\lim_{\alpha}\lim_{\beta}\pi_{A}^{**}(x^{\alpha}_{\beta})(f)=f(a)$
which is a contradiction.
\end{proof}

\section{Johnson pseudo-contractibility and nilpotent ideals}\label{sec3}
A subspace $ E $ of a normed space $X$ is called
complemented in $X$, if  there exists a bounded operator
$P:X\rightarrow E$ such that $P(x)=x$ for every $x\in
E.$ 

\begin{lemma}\label{cr}
	Suppose that $ X $ is a normed space and $ E $ is a complemented subspace of $ X $. Let $ I: E \longrightarrow X  $ be the inclusion map. Then the map
	$ Id_{Y}\otimes I:Y\otimes_{p}E\longrightarrow Y\otimes_{p}X  $
	has closed range for any normed space $ Y $, where
	$ Id_{Y} $ is the identity map of $ Y $.
\end{lemma}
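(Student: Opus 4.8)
The plan is to use the bounded projection onto $E$ to exhibit the range of $Id_{Y}\otimes I$ as the range of a bounded idempotent operator on $Y\otimes_{p}X$, which is automatically closed; no completeness of the spaces will be needed.

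First I would fix a bounded linear projection $P:X\longrightarrow E$ with $P\circ I=Id_{E}$, which exists precisely because $E$ is complemented in $X$. Tensoring with the identity map of $Y$ produces bounded linear maps $Id_{Y}\otimes I:Y\otimes_{p}E\longrightarrow Y\otimes_{p}X$ and $Id_{Y}\otimes P:Y\otimes_{p}X\longrightarrow Y\otimes_{p}E$. Using the identity $(S_{1}\otimes S_{2})\circ(T_{1}\otimes T_{2})=(S_{1}T_{1})\otimes(S_{2}T_{2})$, valid for bounded operators between normed spaces (one checks it on elementary tensors and then extends by linearity and continuity), the composition $Q:=(Id_{Y}\otimes I)\circ(Id_{Y}\otimes P)$ equals $Id_{Y}\otimes(I\circ P)$, a bounded operator on $Y\otimes_{p}X$.

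Next I would observe that $I\circ P$ is an idempotent operator on $X$, since $(IP)(IP)=I(PI)P=I\,Id_{E}\,P=IP$; consequently $Q^{2}=Id_{Y}\otimes(IP)^{2}=Id_{Y}\otimes(IP)=Q$, so $Q$ is a bounded idempotent on $Y\otimes_{p}X$ and its range equals $\ker\big(Id_{Y\otimes_{p}X}-Q\big)$, a closed subspace. Finally I would identify the range of $Id_{Y}\otimes I$ with that of $Q$: the inclusion $\operatorname{ran}Q\subseteq\operatorname{ran}(Id_{Y}\otimes I)$ is immediate from $Q=(Id_{Y}\otimes I)\circ(Id_{Y}\otimes P)$, and conversely, for $u\in Y\otimes_{p}E$ one has, using $P\circ I=Id_{E}$,
\[
Q\big((Id_{Y}\otimes I)(u)\big)=\big(Id_{Y}\otimes(IPI)\big)(u)=(Id_{Y}\otimes I)(u),
\]
so every element of $\operatorname{ran}(Id_{Y}\otimes I)$ lies in $\operatorname{ran}Q$. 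Hence $\operatorname{ran}(Id_{Y}\otimes I)=\ker\big(Id_{Y\otimes_{p}X}-Q\big)$ is closed.

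I do not anticipate a genuine obstacle here: the only technical points are the standard facts that the tensor product of bounded operators between projective tensor products is again bounded and respects composition, and these hold for normed spaces with no completeness hypothesis. The conceptual content is simply the passage from ``complemented subspace'' to ``bounded idempotent'', whose range is closed as the kernel of a continuous operator.
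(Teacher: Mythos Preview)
Your proof is correct and rests on the same idea as the paper's: use the bounded projection $P$ to obtain a left inverse $Id_{Y}\otimes P$ for $Id_{Y}\otimes I$, from which closedness of the range follows. The only difference is packaging: the paper argues directly with sequences, taking $y$ in the closure of the range, a sequence $(x_n)$ with $(Id_Y\otimes I)(x_n)\to y$, and observing that $x_n=(Id_Y\otimes P)(Id_Y\otimes I)(x_n)\to(Id_Y\otimes P)(y)=:x$, whence $(Id_Y\otimes I)(x)=y$; you instead recognise $Q=(Id_Y\otimes I)(Id_Y\otimes P)$ as a bounded idempotent with $\operatorname{ran}Q=\operatorname{ran}(Id_Y\otimes I)=\ker(Id-Q)$, which is a slightly cleaner formulation of the same argument.
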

\begin{proof}
	Suppose that $y\in \overline{\mathrm{Im}(Id_{Y}\otimes I)}$. Then there exists a sequence  $ (x_{n})\subseteq
	Y\otimes_{p} E $ such that $ Id_{Y}\otimes
	I(x_{n})\longrightarrow y $. For every $ n $ we may assume that
	$x_{n}=\sum^{\infty}_{i=1}a_{i}^{n} \otimes b_{i}^{n} $ for some sequences $(a^{n}_{i})$ in $Y$ and $(b^{n}_{i})$ in $E$. Since  $ E $ is a
	complemented subspace of $ X $, there is a bounded operator $
	P:X\longrightarrow E $ such that $
	P(b_{i}^{n})=b_{i}^{n}
	$ for all $  i $ and $ n $. Hence for every $ n $,
	\begin{equation*}
	\begin{split}
	x_{n}-(Id_{Y}\otimes P)\circ(Id_{Y}\otimes
	I)(x_{n})&=\sum\limits_{i}a_{i}^{n} \otimes
	b_{i}^{n}-\sum\limits_{i}a_{i}^{n} \otimes P(b_{i}^{n})
	\\&=\sum\limits_{i}a_{i}^{n} \otimes (b_{i}^{n}-P(b_{i}^{n}))=0.
	\end{split}
	\end{equation*}
	Since  $
	Id_{Y}\otimes P  $ is a bounded map, we have $x_{n}= (Id_{Y}\otimes P)\circ(Id_{Y}\otimes I)(x_{n})\longrightarrow (Id_{Y}\otimes P)(y) $.
	It means that $(x_{n})$ converges to $x=(Id_{Y}\otimes P)(y)$ in
	$Y\otimes_{p}E.$
	So we have
	$$Id_{Y}\otimes I(x)=Id_{Y}\otimes I(\lim_{n}x_{n})=\lim_{n}Id_{Y}\otimes I(x_{n})=y,$$
	which means that $y\in\mathrm{Im}(Id_{Y}\otimes I)$.
	Thus $Id_{Y}\otimes I$ has closed range.
\end{proof}
\begin{Theorem}\label{thm33}
	Suppose that $A$ is a Johnson pseudo-contractible Banach algebra and
	$N$ is a closed  complemented ideal of $A$ and also $E$
	is a closed ideal of $A$ satisfying $E\subseteq N$ and $EN=\{0\}$.
	If $M$ is any subset of $A$ with $ME\subseteq
	\overline{EA} $ $ (EM\subseteq \overline{AE}) $, then
	$ME=\{0\} $ $ (EM=\{0\}) $, respectively.
\end{Theorem}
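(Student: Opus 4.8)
The plan is to use the definition of Johnson pseudo-contractibility to fix a net $(m_{\alpha})\subseteq(A\otimes_{p}A)^{**}$ with $a\cdot m_{\alpha}=m_{\alpha}\cdot a$ and $\pi_{A}^{**}(m_{\alpha})\cdot a-a\to 0$ for every $a\in A$, and then to prove the one-sided identity
\[
\pi_{A}^{**}(m_{\alpha})\cdot e=0\qquad(e\in E,\ \text{every }\alpha),
\]
from which both assertions of the theorem will follow formally. As preparation, since $N$ is complemented in $A$ there is a bounded projection $P\colon A\to N$; view $P$ as a bounded idempotent operator on $A$ with range $N$ (so $P$ restricts to the identity on $N$, hence on $E$), and set $Q=\mathrm{Id}_{A}\otimes P\colon A\otimes_{p}A\to A\otimes_{p}A$, a bounded operator, so that $Q^{**}$ is bounded on $(A\otimes_{p}A)^{**}$. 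This use of the projection is the point behind Lemma~\ref{cr}.

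To obtain the displayed identity, I would apply $Q^{**}$ to the relation $e\cdot m_{\alpha}=m_{\alpha}\cdot e$. Writing $L_{e},R_{e}\colon A\to A$ for left and right multiplication by $e$, the left action of $e$ on $A\otimes_{p}A$ is $L_{e}\otimes\mathrm{Id}_{A}$ and the right action is $\mathrm{Id}_{A}\otimes R_{e}$. Since $Q$ acts only in the second tensor leg it commutes with $L_{e}\otimes\mathrm{Id}_{A}$, so $Q^{**}(e\cdot m_{\alpha})=e\cdot Q^{**}(m_{\alpha})$. On the other hand $R_{e}$ has range inside $Ae\subseteq E\subseteq N$ and $P$ is the identity on $N$, so $Q\circ(\mathrm{Id}_{A}\otimes R_{e})=\mathrm{Id}_{A}\otimes R_{e}$, whence $Q^{**}(m_{\alpha}\cdot e)=m_{\alpha}\cdot e$. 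Thus $e\cdot Q^{**}(m_{\alpha})=m_{\alpha}\cdot e$. Now apply $\pi_{A}^{**}$: because $\pi_{A}$ is an $A$-bimodule morphism (hence so is $\pi_{A}^{**}$ for the Arens products), the right-hand side becomes $\pi_{A}^{**}(m_{\alpha})\cdot e$ and the left-hand side becomes $e\cdot\pi_{A}^{**}(Q^{**}(m_{\alpha}))$. But $\pi_{A}\circ Q$ sends $x\otimes y$ to $xP(y)\in AN\subseteq N$, so it factors through the closed subspace $N$; consequently $\pi_{A}^{**}(Q^{**}(m_{\alpha}))$ lies in the canonical copy of $N^{**}$ inside $A^{**}$. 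Since $EN=\{0\}$, left multiplication by $e$ annihilates $N$ and therefore also annihilates $N^{**}$, so $e\cdot\pi_{A}^{**}(Q^{**}(m_{\alpha}))=0$. This proves the identity.

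Granting the identity, both cases follow by the same device. Suppose $ME\subseteq\overline{EA}$ and fix $m\in M$, $e\in E$; we must show $me=0$. For $e'\in E$ and $a\in A$ one has $\pi_{A}^{**}(m_{\alpha})\cdot(e'a)=(\pi_{A}^{**}(m_{\alpha})\cdot e')\cdot a=0$, and since $x\mapsto\pi_{A}^{**}(m_{\alpha})\cdot x$ is bounded and linear it vanishes on the closed linear span $\overline{EA}$; as $me\in ME\subseteq\overline{EA}$, this gives $\pi_{A}^{**}(m_{\alpha})\cdot(me)=0$ for every $\alpha$, and letting $\alpha$ run in $\pi_{A}^{**}(m_{\alpha})\cdot(me)-me\to 0$ forces $me=0$. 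Hence $ME=\{0\}$. Symmetrically, if $EM\subseteq\overline{AE}$ then, using $\pi_{A}^{**}(m_{\alpha})\cdot a=a\cdot\pi_{A}^{**}(m_{\alpha})$ (which follows from $m_{\alpha}\cdot a=a\cdot m_{\alpha}$), we get $\pi_{A}^{**}(m_{\alpha})\cdot(ae')=(a\cdot\pi_{A}^{**}(m_{\alpha}))\cdot e'=a\cdot(\pi_{A}^{**}(m_{\alpha})\cdot e')=0$ for all $a\in A$, $e'\in E$, and the same boundedness-and-limit argument yields $EM=\{0\}$.

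I expect the delicate part to be purely the bookkeeping with Arens products and second adjoints in the second paragraph: checking that $Q^{**}$ and $\pi_{A}^{**}$ intertwine the bimodule actions as stated, that $(\pi_{A}\circ Q)^{**}=\pi_{A}^{**}\circ Q^{**}$ has range in the copy of $N^{**}$, and that this placement combined with $EN=\{0\}$ genuinely forces $e\cdot\pi_{A}^{**}(Q^{**}(m_{\alpha}))=0$. One should also note the asymmetry: the method yields only the right-sided vanishing $\pi_{A}^{**}(m_{\alpha})\cdot e=0$ (the left-sided one would need $NE=\{0\}$), but this single identity already covers both halves of the statement, as the last paragraph shows.
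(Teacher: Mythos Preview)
Your proof is correct and takes a genuinely different route from the paper's. The paper passes to property $\digamma$ (Proposition~\ref{F property}) to obtain bimodule morphisms $\rho_{\alpha}$, then argues by contradiction: assuming $me\neq 0$ it constructs an element $d_{\alpha}=(q\otimes R_{e})^{**}\rho_{\alpha}(m)\in(\tfrac{A}{E}\otimes_{p}N)^{**}$ (using the quotient $q\colon A\to A/E$), shows $p^{**}(d_{\alpha})\neq 0$, and separately shows $(Id_{A/E}\otimes I)^{**}(d_{\alpha})=0$; the contradiction comes from injectivity of $(Id_{A/E}\otimes I)^{**}$, which in turn requires Lemma~\ref{cr} on the closed range of $Id_{Y}\otimes I$.

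You instead stay with the original net $(m_{\alpha})$ and exploit the \emph{exact} centrality $e\cdot m_{\alpha}=m_{\alpha}\cdot e$ directly. Applying $Q^{**}=(Id_{A}\otimes P)^{**}$ and then $\pi_{A}^{**}$, together with the observation that $(\pi_{A}\circ Q)^{**}$ takes values in $\iota^{**}(N^{**})$ (where $\iota\colon N\hookrightarrow A$) and that $e\cdot\iota^{**}(N^{**})=0$ because $EN=\{0\}$, yields the clean identity $\pi_{A}^{**}(m_{\alpha})\cdot e=0$ for \emph{every} $\alpha$ and $e\in E$. Both halves of the statement then drop out by a single bounded-linearity-plus-limit step. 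Your argument is shorter, avoids the quotient $A/E$, avoids the contradiction format, and makes Lemma~\ref{cr} (and hence the closed-range/second-adjoint-injectivity machinery) unnecessary; the price is that it uses the centrality of the $(m_{\alpha})$ rather than merely the bimodule property of the $\rho_{\alpha}$, but that is exactly what Johnson pseudo-contractibility provides.
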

\begin{proof}
	We modify the argument of \cite[Lemma 2]{zhang}.
	Put $I:N\rightarrow A$ for the inclusion map and also put
	$q:A\rightarrow \frac{A}{E}$ for the quotient map. Let
	$Id_{N},Id_{A}$ and $Id_{\frac{A}{E}}$ be the identity operator on
	$N,A$ and $\frac{A}{E}$, respectively. Let
	$p:\frac{A}{E}\otimes_{p}N\rightarrow N$ be the map given by
	$p((a+E)\otimes n)=an$ for every $a\in A$ and $n\in N.$ Since
	$EN=\{0\}$ one can see that $p$ is well-defined.
	
	We go towards a contradiction and suppose that $ME\neq \{0\}$. So
	there exist  $m\in M$ and $e\in E$ such that $ me\neq 0$ and $me\in
	ME\subseteq \overline{EA}$, thus there exist some sequences
	$(a_{n})$ in $A$
	and $(e_{n})$ in $ E$ such that $e_{n}a_{n}\rightarrow me.$
	Since $A$ is Johnson pseudo-contractible, by Proposition \ref{F
		property} $A$ has property $\digamma$. So there exists a net
	$(\rho_{\alpha})_{\alpha}\subseteq B(A,(A\otimes_{p}A)^{**}) $ of $A$-bimodule morphisms such that
	$\pi^{**}_{A}\circ\rho_{\alpha}(a)-a\rightarrow 0,$ for every $a\in
	A.$  Since $me\neq 0$ and
	$\pi^{**}_{A}\circ\rho_{\alpha}(me)\rightarrow me\neq 0$, we can
	choose $\alpha$
	such that $\pi^{**}_{A}\circ\rho_{\alpha}(me)=\pi^{**}_{A}\circ\rho_{\alpha}(m)\cdot e\neq 0.$
	For $b\in N$, let
	$L_{b}$  $(R_{b})$ be the map of left (right)
	multiplication by $b,$ respectively.
	Define $$d_{\alpha}=(q\otimes R_{e})^{**}\circ \rho_{\alpha}(m)\in
	\bigg(\frac{A}{E}\otimes_{p}N\bigg)^{**}.$$ Consider
	\begin{equation*}
	\begin{split}
	p^{**}(d_{\alpha})=p^{**}((q\otimes R_{e})^{**}\circ
	\rho_{\alpha}(m))=[p\circ(q\otimes R_{e})]^{**}\circ
	\rho_{\alpha}(m) &=(R_{e}\circ\pi_{A})^{**}\circ\rho_{\alpha}(m)
	\\&=R^{**}_{e}\circ\pi^{**}_{A}\circ\rho_{\alpha}(m)
	\\ &=\pi_{A}^{**}\circ\rho_{\alpha}(m)\cdot e\neq 0,
	\end{split}
	\end{equation*}
	which implies that for a sufficiently large $\alpha$ we have $d_{\alpha}\neq
	0.$  Since, $q\circ I\circ L_{e_{n}}=0.$ we have
	\begin{equation*}
	\begin{split}
	(Id_{\frac{A}{E}}\otimes I)^{**}(d_{\alpha})&=[(Id_{\frac{A}{E}}\otimes I)\circ (q\otimes Id_{N})\circ (Id_{A}\otimes R_{e})]^{**}\circ\rho_{\alpha}(m)\\
	&=[(q\otimes Id_{A})\circ (Id_{A}\otimes I)\circ (Id_{A}\otimes
	R_{e})]^{**}\circ\rho_{\alpha}(m)
	\\&=(q\otimes Id_{A})^{**}\circ \rho_{\alpha}(m)\cdot e
	\\ &=(q\otimes Id_{A})^{**}\circ \rho_{\alpha}(me)
	\\ &=(q\otimes Id_{A})^{**}\circ \rho_{\alpha}(\lim_{n} e_{n}a_{n})
	\\ &=\lim_{n}(q\otimes Id_{A})^{**}\circ \rho_{\alpha}(e_{n}a_{n})
	\\ &=\lim_{n}[(q\otimes Id_{A})\circ (I\otimes Id_{A})\circ (L_{e_{n}}\otimes Id_{A})]^{**}\circ\rho_{\alpha}(a_{n})
	\\ &=\lim_{n}[(q\circ I\circ L_{e_{n}})\otimes Id_{A}]^{**}\circ\rho_{\alpha}(a_{n})=0.
	\end{split}
	\end{equation*}
	Since $N$
	is a closed  ideal which is complemented in $A$, so by \cite[Lemma 1]{zhang}  the map $Id_{\frac{A}{E}}\otimes I $
	is injective. Also by the previous lemma $Id_{\frac{A}{E}}\otimes I $ has closed range. Then by   \cite[A.3.48]{auto-dale} the map $(Id_{\frac{A}{E}}\otimes I)^{**}$ becomes injective.
	Thus the equation $(Id_{\frac{A}{E}}\otimes I)^{**}(d_{\alpha})=0$
	implies that $d_{\alpha}=0$ for each $\alpha$ which is impossible.
\end{proof}
\begin{Theorem}
	Let $A$ be a Johnson pseudo-contractible Banach algebra. Then $A$
	has  no non-zero closed nilpotent ideal which is 
	complemented in $A.$
\end{Theorem}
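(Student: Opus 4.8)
The plan is to reduce the statement to Theorem~\ref{thm33} by an induction on the index of nilpotency. Suppose $N$ is a non-zero closed nilpotent ideal of $A$ which is complemented in $A$, and let $k\geq 2$ be minimal with $N^{k}=\{0\}$. First I would set $E=N^{k-1}$. This $E$ is a closed ideal of $A$ contained in $N$, and $EN \subseteq N^{k}=\{0\}$, so the hypotheses $E\subseteq N$ and $EN=\{0\}$ of Theorem~\ref{thm33} are satisfied. Now take $M=A$ in that theorem. We need to check the inclusion hypothesis $ME\subseteq\overline{EA}$, i.e. $AE\subseteq\overline{EA}$; granting this, Theorem~\ref{thm33} yields $AE=\{0\}$, hence in particular $E^{2}=\{0\}$. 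Since $E=N^{k-1}$, combined with minimality of $k$ this will force a contradiction unless $k=2$, and the case $k=2$ (where $E=N$, $N^{2}=\{0\}$) has to be handled too — there $AN=\{0\}$ says $N$ is annihilated by $A$ on the left, and a symmetric application with $EM\subseteq\overline{AE}$ gives $NA=\{0\}$ as well, so $N$ is a closed ideal with trivial $A$-module action on both sides; then the Johnson pseudo-contractibility identity $\pi^{**}_{A}\circ\rho_{\alpha}(a)-a\to0$ applied via property $\digamma$ (Proposition~\ref{F property}), restricted to $a\in N$, gives $a=\lim_{\alpha}\pi^{**}_{A}\circ\rho_{\alpha}(a)=\lim_{\alpha}\rho_{\alpha}$ evaluated against the module structure, and since $\rho_{\alpha}(a)\in(A\otimes_{p}A)^{**}$ has $\pi^{**}_{A}\circ\rho_{\alpha}(a)$ lying in the closure of $A^{2}$ acted on by $N$'s annihilating actions, one deduces $N=\{0\}$.

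The delicate point is verifying the inclusion hypothesis $AE\subseteq\overline{EA}$. Here I would exploit that $A$, being Johnson pseudo-contractible and in particular (by \cite[Corollary~2.7]{sah1}, as used in the Remark above) pseudo-amenable, has an approximate identity $(e_{\lambda})$; more relevantly the lemma just before section~\ref{sec3} shows $A^{2}$ is dense in $A$. Thus for $a\in A$ and $x\in E$ we can write $a$ as a limit of elements $\sum b_{j}c_{j}$ with $b_{j},c_{j}\in A$, so $ax$ is approximated by $\sum b_{j}(c_{j}x)$; since $E$ is an ideal, $c_{j}x\in E$, and we want this in $\overline{EA}$ — which is not immediate. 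The cleaner route is: set $E=N^{k-1}$ and instead observe $AE=A N^{k-1}$; since $N$ is an ideal, $AN^{k-1}\subseteq N^{k-1}=E$, and every element of $N^{k-1}$ is a limit of sums of products $x_{1}\cdots x_{k-1}$ with $x_{i}\in N$; for $k\geq 3$ such a product lies in $N\cdot N^{k-2}\subseteq N\cdot A$ but also, writing the last factor out, it lies in $N^{k-2}\cdot N\subseteq \overline{EA}$ when we group as $(x_{1}\cdots x_{k-2})x_{k-1}\in N^{k-2}N$; so I would carefully use the two-sided ideal structure to land products in $\overline{EA}$, applying the ``$ME\subseteq\overline{EA}$'' branch, and symmetrically the ``$EM\subseteq\overline{AE}$'' branch, to squeeze $N^{k-1}$ down.

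Concretely, I expect the argument to run: if $k\geq 3$, then with $E=N^{k-1}$ we have $N\cdot E\subseteq N^{k}=\{0\}$ already, while $N^{k-2}\cdot E$-type products show $E\subseteq N\cdot N^{k-2}$, and a short manipulation places $AE$ inside $\overline{EA}$ (using $E=N E' $ for $E'=N^{k-2}$ and $NA\subseteq A$, $AN\subseteq N$); Theorem~\ref{thm33} then gives $AE=\{0\}$, so $N^{k}=\{0\}$ improves to $N^{k-1}\cdot N =\{0\}$ giving $E^{2}\subseteq N^{2k-2}=\{0\}$ — but more usefully $AN^{k-1}=\{0\}$ means $N^{k-1}$ is a left annihilator ideal, and a symmetric run gives $N^{k-1}A=\{0\}$; feeding this back, $N^{k-1}$ is a closed ideal with zero module actions, and the density-of-$A^{2}$ lemma together with property~$\digamma$ forces $N^{k-1}=\{0\}$, contradicting minimality of $k$. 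Hence $k=2$, i.e. $N^{2}=\{0\}$; applying Theorem~\ref{thm33} with $E=N=M=A\cap N$ — precisely $M=A$, $E=N$ — gives $AN=\{0\}$ and symmetrically $NA=\{0\}$, and the same property~$\digamma$ argument now applied directly to $N$ yields $N=\{0\}$. The main obstacle, as flagged, is organizing the inclusion hypotheses $ME\subseteq\overline{EA}$ (and its mirror) so that Theorem~\ref{thm33} is genuinely applicable at each stage; once those set-theoretic inclusions among powers of $N$ are pinned down, everything else is a direct invocation of Theorem~\ref{thm33}, Proposition~\ref{F property}, and the lemma that $A^{2}$ is dense in $A$.
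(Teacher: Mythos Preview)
Your skeleton is right --- apply Theorem~\ref{thm33} with $E$ the closure of the linear span of $N^{k-1}$ (you need the closure to ensure $E$ is a closed ideal) and $M=A$ --- but you are substantially overcomplicating both the verification of the inclusion hypothesis and the endgame, and as written neither is actually pinned down.

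The step you flag as ``delicate'', namely $AE\subseteq\overline{EA}$, is in fact immediate from the very approximate identity $(e_{\lambda})$ you mention and then abandon. Since $E$ is a closed ideal, $AE\subseteq E$; and for any $x\in E$ we have $x=\lim_{\lambda}xe_{\lambda}\in\overline{EA}$, so $E\subseteq\overline{EA}$, hence $E=\overline{EA}$. Symmetrically $E=\overline{AE}$. Thus $AE\subseteq E=\overline{EA}$, and Theorem~\ref{thm33} applies directly with $M=A$. Your detours through density of $A^{2}$, groupings of products $x_{1}\cdots x_{k-1}$, and manipulations with $N^{k-2}$ are unnecessary and, as you yourself note, never quite close.

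The same observation collapses your entire induction and the property-$\digamma$ finale. Once Theorem~\ref{thm33} gives $AE=\{0\}$, you are already done: $E=\overline{AE}=\{0\}$, contradicting $N^{k-1}\neq\{0\}$. There is no need to reduce $k$, run a symmetric argument, or invoke $\pi_{A}^{**}\circ\rho_{\alpha}$ on elements of $N$. This is exactly the paper's proof: one application of Theorem~\ref{thm33}, bookended by the single identity $\overline{AE}=E=\overline{EA}$ coming from the approximate identity (via pseudo-amenability, \cite[Proposition~2.6]{sah1}).
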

\begin{proof}
	We go towards a contradiction and suppose that we have a non-zero
	closed nilpotent ideal $N$. Let $n\in\mathbb{N}$ such that
	$N^{n}\neq {0}$ and $N^{n+1}={0}$. Let $E$ be the closure of the linear span of $N^{n}$. Obviously $ E $ is a non-zero closed ideal in $ A $. Moreover $E\subseteq N$ and $ EN=0 $. On the other hand since  $A$ is a
	Johnson pseudo-contractible Banach algebra, by \cite[Proposition 2.6]{sah1} $A$ is
	pseudo-amenable. Therefore $A$ has an approximate identity. So we have $\overline{AE}=\overline{EA}=E$. Since $AE\subseteq
	E=\overline{EA}$, Theorem \ref{thm33} implies that $AE=\{0\}$. Thus
	$E=\overline{AE}=\{0\}$ which is a contradiction.
\end{proof}
Since every finite dimensional subspace of a Banach algebra is
complemented we have the following corollary:
\begin{cor}
	Every Johnson pseudo-contractible Banach algebra has no non-zero
	finite dimensional nilpotent ideal.
\end{cor}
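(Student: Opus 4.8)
The plan is to deduce the corollary directly from the preceding theorem, the only additional ingredient being the classical fact that finite dimensional subspaces of a Banach space are complemented. So I would argue by contradiction: assume $A$ is Johnson pseudo-contractible and that $A$ has a non-zero finite dimensional nilpotent ideal $N$.

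First I would note that $N$ is automatically a \emph{closed} ideal, since every finite dimensional subspace of a normed space is closed. Next, the key (and only non-trivial) observation is that $N$ is \emph{complemented} in $A$. This is the standard fact: picking a basis $e_{1},\dots,e_{k}$ of $N$ with associated coordinate functionals $f_{1},\dots,f_{k}\in N^{*}$, one uses the Hahn--Banach theorem to extend each $f_{i}$ to $\widetilde{f_{i}}\in A^{*}$, and then $P\colon A\rightarrow N$ defined by $P(a)=\sum_{i=1}^{k}\widetilde{f_{i}}(a)\,e_{i}$ is a bounded linear map with $P(x)=x$ for every $x\in N$. Hence $N$ is a non-zero closed nilpotent ideal of $A$ which is complemented in $A$.

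Finally I would invoke the previous theorem, which states that a Johnson pseudo-contractible Banach algebra has no non-zero closed nilpotent ideal complemented in $A$. This contradicts the existence of $N$, and the corollary follows.

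There is essentially no obstacle here: the entire content is the complementation of finite dimensional subspaces, which is exactly the remark the paper places immediately before the corollary, so the proof reduces to quoting Theorem of the previous paragraph. If desired, one could alternatively cite a standard reference for the complementation fact rather than reproducing the Hahn--Banach construction.
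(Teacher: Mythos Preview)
Your proof is correct and follows exactly the paper's approach: the paper simply remarks that every finite dimensional subspace of a Banach algebra is complemented and deduces the corollary immediately from the preceding theorem. You supply the standard Hahn--Banach justification for that complementation fact and the observation that finite dimensional implies closed, which the paper leaves implicit, but the argument is the same.
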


It is well-known that every closed subspace of a Hilbert space is
complemented, and up to isomorphism the only Banach spaces with this property are Hilbert spaces.

Also in a commutative Banach
algebra, the ideal generated by a nilpotent element is  nilpotent. So we
have the following corollary:
\begin{cor}
	Let $A$ be a  Johnson pseudo-contractible Banach algebra whose underlying space is a Hilbert
	space. Then $A$ has  no non-zero nilpotent closed ideal. Moreover if
	$A$ is commutative, then $A$ has  no non-trivial nilpotent element.
\end{cor}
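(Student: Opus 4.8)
The plan is to derive the corollary directly from the preceding theorem, which states that a Johnson pseudo-contractible Banach algebra has no non-zero closed nilpotent ideal that is complemented in $A$. The only extra ingredient is the well-known fact, recalled just above, that every closed subspace of a Hilbert space is complemented; this makes the complementation hypothesis of that theorem automatic in the present setting.

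For the first assertion, argue by contradiction. Suppose $N$ is a non-zero closed nilpotent ideal of $A$. Since the underlying space of $A$ is a Hilbert space and $N$ is a closed subspace, the orthogonal projection $P\colon A\to N$ is a bounded linear operator with $P(x)=x$ for all $x\in N$, so $N$ is complemented in $A$ in the sense of Section \ref{sec3}. The preceding theorem then applies to $N$ and gives a contradiction; hence $A$ has no non-zero closed nilpotent ideal.

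For the second assertion, assume in addition that $A$ is commutative and that $a\in A$ is a non-zero nilpotent element with $a^{n}=0$. Let $J$ be the ideal of $A$ generated by $a$. A routine computation using commutativity shows $J^{k}\subseteq\mathbb{C}a^{k}+Aa^{k}$ for every $k\geq 1$, so $J^{n}=\{0\}$ and $J$ is nilpotent. Its closure $\overline{J}$ is a closed ideal of $A$, and since $\overline{I}\,\overline{J}\subseteq\overline{IJ}$ one gets, by iteration, $\overline{J}^{\,n}\subseteq\overline{J^{n}}=\{0\}$, so $\overline{J}$ is a closed nilpotent ideal; it is non-zero because $a\in\overline{J}$ and $a\neq 0$. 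This contradicts the first assertion and finishes the proof.

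I do not expect a genuine obstacle here: the corollary only repackages what has already been established. The two points that warrant a line of justification are that the orthogonal projection exhibits $N$ as a complemented subspace in the exact sense used in Section \ref{sec3}, and that nilpotency of an ideal passes to its closure via the inclusions $\overline{I}\,\overline{J}\subseteq\overline{IJ}$.
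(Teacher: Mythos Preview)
Your proposal is correct and follows essentially the same approach as the paper: the corollary is stated there without a formal proof, being presented as an immediate consequence of the preceding theorem together with the two remarks that every closed subspace of a Hilbert space is complemented and that, in a commutative Banach algebra, the ideal generated by a nilpotent element is nilpotent. Your write-up simply makes these deductions explicit, including the passage to the closure of the ideal, and there is nothing to correct.
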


{\bf Acknowledgments.} {The authors are  grateful to the referees for useful comments which improved the manuscript and for pointing out a number of misprints.}

\providecommand{\bysame}{\leavevmode\hbox to3em{\hrulefill}\thinspace}
\providecommand{\MR}{\relax\ifhmode\unskip\space\fi MR }
\providecommand{\MRhref}[2]{%
  \href{http://www.ams.org/mathscinet-getitem?mr=#1}{#2}
}
\providecommand{\href}[2]{#2}

\end{document}